\newtheorem{theorem}{Theorem}[section]
\newtheorem{corollary}[theorem]{Corollary}
\newtheorem{lemma}[theorem]{Lemma}
\newtheorem{proposition}[theorem]{Proposition}
\theoremstyle{remark}
\newtheorem{remark}[theorem]{Remark}
\newcommand{\CC}{\mathcal{C}}
\newcommand{\CG}{\mathcal{G}}
\newcommand{\CS}{\mathcal{S}}
\newcommand{\CV}{\mathcal{V}}
\newcommand{\CW}{\mathcal{W}}
\newcommand{\rmW}{\mathrm{W}}
\newcommand{\BBP}{{\mathbb P}}
\newcommand{\BBQ}{{\mathbb Q}}
\newcommand{\BBZ}{{\mathbb Z}}
\newcommand{\Hom}{\mathrm{Hom}}
\newcommand{\Ima}{\mathrm{Im}}
\newcommand{\Ker}{\mathrm{Ker}}
\newcommand{\Ext}{\mathrm{Ext}}
\newcommand{\End}{\mathrm{End}}
\title[On a theorem of Stelzer]{On a theorem of Stelzer for some classes of mixed groups}
\author{Simion Breaz}
\address[bodo@math.ubbcluj.ro]{"Babe\c s-Bolyai" University, Faculty of Mathematics and Computer Science, Str. Mihail Kog\u alniceanu 1, 400084, Cluj-Napoca, Romania}
\begin{document}

\begin{abstract}
We identify some classes $\mathcal{C}$ of mixed groups such that if $G\in \mathcal{C}$ has the cancellation property then the Walk-endomorphism ring of $G$ has the unit lifting property. In particular,  if $G$ is a self-small group of torsion-free rank at most $4$ with the cancellation property then it has a decomposition $G=F\oplus H$ such that $F$ is free and the Walk-endomorphism ring of $H$ has the unit lifting property.
\end{abstract}


\subjclass[2010]{Primary: 20K21; Secondary: 20K25, 20K30.
}

\keywords{
Self-small abelian group, Stelzer's theorem, cancellation property.
}
\maketitle

\section{Introduction}
In this paper all groups are abelian. If $G$ is a group then $T(G)$ represents the torsion part of $G$, for a prime $p$ the $p$-component of $G$ is denoted by $T_p(G)$, $r_0(G)$ is the torsion-free rank of $G$, and $\End(G)$ is the endomorphism ring of $G$. For other notions and notations we refer to \cite{Fuchs-2015} and to Section \ref{prel-exist-H}. 

A group $G$ has the \textit{cancellation property} if whenever $H$ and $K$ are  groups such that $G\oplus H\cong G\oplus K$ it follows that $H\cong K$. An important question is if we can find classes of groups which have the cancellation property and they can be characterized by using properties of some endomorphism rings. For instance a group $G$ has the substitution property if and only if one is in the stable range of $\End(G)$, \cite{Warf}.

For finite rank torsion-free groups the cancellation property was characterized in \cite[Theorem 21]{Bla} by using properties of some endomorphism rings: a torsion-free group $G$ of finite rank has the cancellation property if and only if $G=B\oplus C$ such that (a) $B$ is free (b) for every positive integer $n$ the units of $\End(C)/n\End(C)$ can be lifted to units of $\End(C)$, and  (c) the endomorphism rings of all quasi-direct summands of $C$ satisfy some technical conditions discovered by Eichler in the 1930s. If a ring $R$ has the property described in (b), i.e. for every positive integer $n$ all units of $R/nR$ can be lifted to units of $R$, then we will say that $R$ has \textit{the unit lifting property}. The properties (a) and (b) for necessity part of Blazhenov's characterization were proved in \cite[Theorem A]{Stel}.  
In fact, Stelzer proved that if $G$ is a reduced torsion-free group of finite rank without free direct summands there exists a torsion-free group $H$ such that (i) $\Hom(H,G)=0=\Hom(G,A)$, (ii) $\End(H)\cong \BBZ$, and (iii) for every positive integer $n$ there is an epimorphism $A\to G/nG$.
Then the conclusion comes from a result of Fuchs, \cite{Fuchs-canc}, which states that if $G$ satisfies these three properties and it has the cancellation property then $\End(G)$ has the unit lifting property.
  
We denote by $\mathcal{S}$ the class of self-small (mixed) groups of finite torsion-free rank. Some of results about finite rank torsion-free groups and also some techniques used in the theory of finite rank torsion-free groups can be extended to some subclasses of $\CS$, e.g. \cite{ABVW}, \cite{Br-qd},  \cite{Br-Warf}. However, this is not alway valid. We refer to \cite[Example 2.5]{ABVW} and \cite[Example 3.6]{Br-Warf} for some examples. 
Regarding the results about the cancellation property, we note that in \cite[Section 2]{ABVW} the connections between the substitution property and the unit lifting property proved in \cite{Ar} for torsion-free groups were extended for groups in $\CS$.  Mixed versions of \cite[Theorem B]{Stel} were proved for the class $\mathcal{QD}$ of quotient-divisible groups (we remind that $\mathcal{QD}\subseteq \CS$) and for self-small groups of torsion-free rank $1$ in \cite[Theorem 3.4 and Proposition 3.5]{ABVW}. A mixed version of Fuchs's Lemma was proved in 
\cite{Br-2020}, see Proposition \ref{fuchs-mixed}.

One of the aims of the present paper is to prove that a mixed version for
Stelzer's Theorem \cite[Theorem A]{Stel}, which uses the Walk-endomorphism ring instead of the classical endomorphism ring, is valid for some classes of self-small groups. In particular, if a self-small group is of torsion-free rank at most $4$ has no free direct summands and it has the cancellation property then its Walk-endomorphism ring has the unit lifting property (Theorem \ref{main-thm}).  

For reader's convenience, we close this introduction with a sketch of the proof of this result. It is easy to see that every self-small group $G$ of finite torsion-free rank has a direct decomposition $G=F\oplus H$ such that $F$ is free and $H$ is self-small without free direct summands. Moreover, $G$ has the cancellation property if and only if $H$ has the cancellation property. Hence it is enough to study self-small groups without free direct summands. Similar reasons allow us to restrict ourselves to the case of reduced groups. 
\textit{Let $\CS^\star$ be the class of reduced self-small groups of finite torsion-free rank without free direct summands}. Our aim is to apply the mixed version of Fuchs' lemma, Proposition \ref{fuchs-mixed}. Therefore, in Section \ref{sufficient-conditions} we will provide some sufficient conditions which can be verified in order to conclude that a group $G\in \CS^\star$ satisfies the hypothesis of this proposition. In the next section we will write every group $G\in\CS^\star$ as a sum $G=L+K$ where $L\cap K$ is a full free subgroup of $G$, $L$ is quotient-divisible and $K$ is torsion-free such that the Richman type of $K$ is reduced. Since all groups used here are of finite torsion-free rank, there exists a decomposition $L=F_1\oplus L_1$ such that $F_1$ is free and $L_1$ has no free direct summands. In Proposition \ref{cazuri} we will describe the structure of the groups $G\in\CS^\star$ for which the rank of $F_1$ or the torsion-free rank of $L_1$ is small enough. We will use this result, together with Lemma \ref{fara-calG}, to prove that if $G\in \CS^\star$ is of torsion-free rank at most $4$ and it has the cancellation property then the Walk-endomorphism ring of $G$, $\End_{\rmW}(G)=\End(G)/\Hom(G,T(G))$, has the unit lifting property, Theorem \ref{main-thm}. This will be enough to conclude that for every $G\in \CS$ of the torsion-free rank at most $4$, if $G$ has the cancellation property then $G=F\oplus H$ such that $F$ is free and the Walk-endomorphism ring $\End_{\rmW}(H)$ has the unit lifting property.

\section{Preliminaries and known results}\label{prel-exist-H}

\subsection{Self-small groups}\label{self-small-subsect}
Self-small groups were introduced by Arnold and Murley in \cite{Ar-Mu} as those groups $G$ such that for every set $I$ the canonical homomorphism $\Hom(G,G)^{(I)}\to \Hom(G,G^{(I)})$ is an isomorphism. The class of self-small groups of finite torsion-free rank is denoted by $\CS$. The following characterizations for the groups from $\CS$ are presented in \cite[Theorem 2.1]{ABW-09}. For other characterizations we refer to \cite[Section 3]{ABW-09} and \cite[Theorem 3.1]{Br-S}. 

\begin{theorem}\label{char-ss}
Let $G$ be a group of finite torsion-free rank. The following are equivalent:
\begin{enumerate}[{\rm 1)}]
 \item $G\in \CS$;
 \item for all primes $p$ the $p$-components $T_p(G)$ are finite, and $\Hom(G,T(G))$ is a torsion group;
 \item for every prime $p$ the $p$-component $T_p(G)$ is finite, and if $F_G\leq G$ is a full free subgroup of $G$ then $G/F_G$ is $p$-divisible for almost all primes $p$ such that $T_{p}(G)\neq 0$.
 \end{enumerate}
\end{theorem}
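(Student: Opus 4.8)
The plan is to prove the cycle $1)\Rightarrow 2)$, $2)\Leftrightarrow 3)$, and $3)\Rightarrow 1)$, using throughout the standard reduction that $G$ is self-small if and only if every homomorphism $\phi\colon G\to G^{(\omega)}$ has \emph{finite support}, meaning that all but finitely many of its component maps $\phi_i\colon G\to G$ vanish; one checks as usual that it suffices to test the defining condition on a countable index set. I will fix a full free subgroup $F_G\cong\BBZ^n$ with $n=r_0(G)$ and repeatedly exploit the exact sequence $0\to T(G)\to G/F_G\to \bar G/\bar F_G\to 0$, where $\bar G=G/T(G)$ is torsion-free of rank $n$ and $\bar F_G$ is the full free image of $F_G$, so that $\bar G/\bar F_G$ embeds in $(\BBQ/\BBZ)^n$ and each $(\bar G/\bar F_G)_p$ is a $p$-subgroup of $\BBZ(p^\infty)^n$.

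For $1)\Rightarrow 2)$ I argue by contraposition in two independent steps. First, if some $f\in\Hom(G,T(G))$ has infinite order, then since each $T_p(G)$ is finite it contributes only a bounded summand, so the set $P$ of primes with $\pi_p f\neq 0$ must be infinite; writing $f_p=\pi_p f\colon G\to T_p(G)\subseteq G$ and setting $\phi(g)=(f_p(g))_{p\in P}$ defines a homomorphism $G\to G^{(P)}\subseteq G^{(\omega)}$ — the direct-sum condition holds because $f(g)\in\bigoplus_p T_p(G)$ — with infinitely many nonzero components, so $G$ is not self-small; hence $\Hom(G,T(G))$ is torsion. Second, if some $T_p(G)$ is infinite I again produce an infinite-support map into $G^{(\omega)}$: if $T_p(G)$ contains a copy of $\BBZ(p^\infty)$ it splits off as a direct summand of $G$ and I compose the projection with the usual ``diagonal'' embedding $\BBZ(p^\infty)\hookrightarrow\BBZ(p^\infty)^{(\omega)}$ of infinite support; if $T_p(G)$ is reduced I use that $T_p(G)$ is pure in $G$ and that its bounded basic factors are pure-injective to spread an infinite basic subgroup across infinitely many coordinates.

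The equivalence $2)\Leftrightarrow 3)$ is the algebraic core, carried out under the common hypothesis that all $T_p(G)$ are finite. The decisive structural observation is that, because $T_p(G)$ is finite and $(\bar G/\bar F_G)_p\subseteq\BBZ(p^\infty)^n$, the reduced part of each $(G/F_G)_p$ has finite socle and is therefore finite; consequently, for $T_p(G)\ne 0$, one has $(G/F_G)_p$ divisible if and only if $\Hom((G/F_G)_p,T_p(G))=0$. Since a homomorphism $G\to T(G)$ is a torsion element of $\Hom(G,T(G))$ exactly when it has finite prime-support, and any homomorphism vanishing on $F_G$ splits over primes as a family of maps $(G/F_G)_p\to T_p(G)$, the failure of $3)$ — infinitely many primes $p$ with $T_p(G)\ne 0$ and $(G/F_G)_p$ non-divisible — yields a homomorphism of infinite prime-support by bundling nonzero maps $G\to (G/F_G)_p\to T_p(G)$, hence a non-torsion element; the converse direction runs the same bookkeeping backwards. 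Restriction to the finite basis of $F_G$ together with the pointwise-finiteness of direct sums, combined with $G=pG+F_G$ on the divisible primes, is what converts the prime-support accounting into the divisibility statement.

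Finally, $3)\Rightarrow 1)$: given $\phi\colon G\to G^{(\omega)}$, pass modulo torsion to $\bar\phi\colon \bar G\to\bar G^{(\omega)}$; as $\bar G$ is torsion-free of finite rank it is self-small, so for all but finitely many $i$ the component $\phi_i$ maps $G$ into $T(G)$. Evaluating on the finite basis of $F_G$ and using that $\phi$ lands in the direct sum shows that, for almost all $i$, the map $\phi_i$ also vanishes on $F_G$ and so factors through $G/F_G$. Condition $3)$ now confines each such $\phi_i$ to the finite set $S$ of primes where $(G/F_G)_p$ is non-divisible, and on those primes $\phi_i$ factors through the finite reduced part of $(G/F_G)_p$; since a homomorphism from a finite group into a direct sum has finite support, only finitely many $\phi_i$ can be nonzero, whence $G$ is self-small. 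I expect the main obstacle to be the finiteness of $T_p(G)$ in $1)\Rightarrow 2)$, specifically the reduced unbounded case, where the infinite-support homomorphism must be constructed explicitly while ensuring that it still lands in the direct sum $G^{(\omega)}$ rather than merely in the product.
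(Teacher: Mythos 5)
First, a remark on the comparison itself: the paper does not prove Theorem \ref{char-ss} — it is quoted from \cite[Theorem 2.1]{ABW-09} — so your argument can only be judged on its own terms. On those terms, your $2)\Leftrightarrow 3)$ and $3)\Rightarrow 1)$ are essentially sound: the key structural fact that each $(G/F_G)_p$ is a finite-rank divisible group plus a finite group (from finiteness of $T_p(G)$ and $(\overline{G}/\overline{F}_G)_p\subseteq \BBZ(p^\infty)^n$) does make the prime-by-prime bookkeeping work in both directions, and in $3)\Rightarrow 1)$ the decisive point — that almost all components $\phi_i$ factor through one fixed finite quotient $\bigoplus_{p\in S}(G/F_G)_p/D_p$ via the canonical surjection, so the direct-sum condition evaluated on the finitely many elements of that quotient forces almost all $\phi_i$ to vanish — is present, if compressed, in your sketch.

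The genuine gap is exactly where you yourself locate it: in $1)\Rightarrow 2)$, the claim that a self-small group of finite torsion-free rank cannot have an infinite \emph{reduced unbounded} $p$-component. Your proposed tool — purity of $T_p(G)$ in $G$ plus pure-injectivity of the bounded pieces of a basic subgroup $B=\oplus_n B_n$ — produces for each $n$ an extension $\widetilde{\pi}_n:G\to B_n$ of the coordinate projection, but gives no control over $\widetilde{\pi}_n(g)$ for $g\notin B$: neither for $g\in T_p(G)\setminus B$ (when $T_p(G)$ is not a direct sum of cyclics) nor for elements of infinite order. Hence the bundled map $g\mapsto(\widetilde{\pi}_n(g))_n$ is only guaranteed to land in the product $\prod_n G$, not in $G^{(\omega)}$, which is precisely the obstruction you flag; pure-injectivity cannot force the needed eventual vanishing because the extensions are unconstrained off $T_p(G)$. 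To see that real work is required, consider $G$ generated by $\oplus_n\langle b_n\rangle$ with $o(b_n)=p^n$ together with the diagonal element $(b_n)_n$ of the product: every natural choice of coordinate projections is nonzero on that element for \emph{all} $n$, and one must instead use telescoped differences of projections, adjusted against a generating set of $G$ modulo $T(G)+F_G$, to land in the direct sum — an argument (using the finite torsion-free rank in an essential way) that your sketch does not contain. Moreover, since your proof that $\Hom(G,T(G))$ is torsion quotes the finiteness of the $T_p(G)$, the gap propagates to the whole implication $1)\Rightarrow 2)$ as written (though that half is repairable: infinite order of $f$ gives either infinite prime support or some unbounded $f_p$, and then $g\mapsto(p^nf_p(g))_n$ works). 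A small error in the same vein: there is no ``diagonal embedding'' $\BBZ(p^\infty)\hookrightarrow\BBZ(p^\infty)^{(\omega)}$ — the diagonal lands in the product; what works there is $x\mapsto(p^nx)_{n\geq 0}$, valid precisely because the source is torsion.
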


Consequently, if $G\in \CS$ then for every finite set $P$ of primes the $P$-component $\oplus_{p\in P}T_p(G)$ is a finite direct summand of $G$. In this case we will fix a direct decomposition $G=(\oplus_{p\in P}T_p(G))\oplus G(P)$. If $n>1$ is an integer, we denote by $P_n$ the set of all prime divisors of $n$, and we will write $G=T_n(G)\oplus G(n)$, where $T_n(G)=\oplus_{p\in P_n}T_p(G)$ and $G(n)=G(P_n)$. 

There are two important classes of groups which are contained in $\CS$: the class $\mathcal{TF}$ of finite rank torsion-free groups, \cite{Ar-Mu}, and the class $\mathcal{QD}$ of quotient-divisible groups, \cite{FoW98}. 
We recall that a group of finite torsion-free rank $G$  is \textit{quotient-divisible} if its torsion part is reduced and there exists a full free subgroup $F\leq G$ such that $G/F$ is divisible. 

If $\CC\subseteq \CS$, we will denote by $\BBQ\CC$ the \textit{quasi-category} associated to $\CC$. This is a category which have as objects the groups from $\CC$, and $\Hom_{\BBQ\CC}(G,H)=\BBQ\otimes \Hom(G,H)$ (this group is denoted by $\BBQ\Hom(G,H)$) for all $G,H\in \CC$. The reader can find a general discussion about quasi-categories in \cite{Br-Mo}. Since for every $G\in \CS$ the support group of $\End(G)$ is of finite torsion-free rank, the ring $\BBQ\End(G)$ is a finite dimensional $\BBQ$-algebra, and it follows that the the quasi-category $\BBQ\CS$ is Krull-Schmidt, \cite{Br-qd}. Moreover, a group $G\in\CS$ is indecomposable in $\BBQ\CS$ (such a group is called \textit{strongly indecomposable}) if and only if $\BBQ\End(G)$ is a local ring. 

It was proved in \cite{FoW98} that there exists a duality $d:\BBQ\mathcal{TF}\rightleftarrows \BBQ\mathcal{QD}:d$. Moreover, the restriction of $d$ to the subcategory $\BBQ\mathcal{TFQD}$ of all torsion-free groups which are quotient-divisible induce a duality $d:\BBQ\mathcal{TFQD}\to \BBQ\mathcal{TFQD}$, \cite{Ar72}. 
In the following we recall some properties associated to this duality. If $X$ is in $\mathcal{TF}$ or in $\mathcal{QD}$ then $d(X)$ is constructed in \cite[Theorem 10]{FoW98} by using a full free subgroup $F\leq X$. Therefore $d(X)$ is unique up to a quasi-isomorphism.

\begin{lemma}\label{prop-duality}
The following are true for a group $X$ and a full free subgroup $F\leq X$:
\begin{enumerate}[{\rm a)}]
 \item if $X\in \mathcal{TF}$ and $p$ is a prime, the $p$-component of $X/F$ is bounded if and only if $d(X)=B\oplus Y$ with $B$ a finite group and $Y$ a $p$-divisible group;
 \item if $X\in \mathcal{QD}$, $X/F$ is divisible and $p$ is a prime, we have $(X/F)_p=0$ if and only if $d(X)$ is $p$-divisible;
 \item $d$ preserves the torsion-free rank.
\end{enumerate}
\end{lemma}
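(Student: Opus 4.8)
The plan is to read off all three assertions from the explicit construction of the dual in \cite[Theorem 10]{FoW98}, which produces $d(X)$ from the chosen full free subgroup $F\leq X$ in a way that is local at each prime; the point is that the arithmetic invariants appearing in a), b), c) are each concentrated at a single prime, so it suffices to understand what the construction does to the $p$-adic completion of $X$ for one prime $p$ at a time. First I would record the rank statement c): the construction equips $d(X)$ with a full free subgroup of rank equal to $\mathrm{rank}(F)=r_0(X)$, so that $r_0(d(X))=r_0(X)$. This is immediate from the construction and uses no primewise analysis.

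For the local dictionary I would tensor the defining sequence $0\to F\to X\to X/F\to 0$ with the (flat) ring $\BBZ_p$. When $X\in\mathcal{TF}$ this identifies $(X/F)_p$ with the quotient $(\BBZ_p\otimes X)/(\BBZ_p\otimes F)$, where $\BBZ_p\otimes F\cong\BBZ_p^{\,n}$ and $n=r_0(X)$, and shows that $(X/F)_p$ is bounded (equivalently finite, since $X$ has finite rank) if and only if $\BBZ_p\otimes X$ is a free $\BBZ_p$-module of rank $n$, i.e. the $p$-adic completion of $X$ has no $\BBQ_p$-divisible summand. This is the invariant that the Fomin--Wickless construction converts into the $p$-structure of $d(X)$: the reduced free $\BBZ_p$-part of $X$ is dual to the $p$-divisible part of $d(X)$, while the bounded (possibly zero) $p$-component $(X/F)_p$ is converted into a finite $p$-torsion summand $B$ together with a $p$-divisible complement $Y$. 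Reading this correspondence off the construction gives a), and conversely the presence of a $\BBQ_p$-summand in $\BBZ_p\otimes X$ (that is, $(X/F)_p$ unbounded) obstructs any splitting $d(X)=B\oplus Y$ with $Y$ $p$-divisible.

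For b) I would run the same local computation on the other side of the duality. Now $X\in\mathcal{QD}$ with $X/F$ divisible, so $(X/F)_p\cong\BBZ(p^\infty)^{k_p}$ and the hypothesis $(X/F)_p=0$ reads $k_p=0$; under the construction this $k_p$ measures precisely the failure of $d(X)$ to be $p$-divisible. Since here $d(X)\in\mathcal{TF}$ is torsion-free, there is no finite summand to account for, and the equivalence sharpens to the clean statement that $(X/F)_p=0$ if and only if $d(X)$ is $p$-divisible; this also explains why a) carries the extra finite summand $B$ while b) does not. I would add a short check that the conditions are independent of the admissible choice of $F$: two full free subgroups are commensurable, so ``$(X/F)_p$ bounded'' does not depend on $F$, and among the $F$ with $X/F$ divisible the corank $k_p$ of the divisible group $(X/F)_p$ is an invariant of $X$. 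This compatibility is what makes the statements meaningful even though $d(X)$ is defined only up to quasi-isomorphism.

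The main obstacle I expect is the bookkeeping in a): extracting from the construction not merely that $\BBZ_p\otimes X$ is free over $\BBZ_p$, but that this freeness yields an \emph{honest} direct summand decomposition $d(X)=B\oplus Y$ with $B$ finite and $Y$ $p$-divisible, and tracking throughout the distinction between being $p$-divisible and being $p$-divisible only after splitting off a finite summand. By contrast the rank statement c) and the torsion-free case b) are comparatively routine once the local dictionary is in place; essentially all of the work lies in making the primewise correspondence of a) precise from the formulas of \cite{FoW98} and \cite{Ar72}.
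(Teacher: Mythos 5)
Your proposal is correct and is essentially the paper's own approach: the paper gives this lemma with no proof at all, presenting a)--c) as properties recalled directly from the construction of $d(X)$ via a full free subgroup in \cite[Theorem 10]{FoW98} (together with \cite{Ar72} for the torsion-free quotient-divisible case), which is exactly the construction you propose to unpack. Your primewise dictionary (boundedness of $(X/F)_p$ versus freeness of the localization $\BBZ_p\otimes X$, the corank exchange under the duality, and the independence of the choice of $F$) is a faithful, somewhat more detailed elaboration of that same reading-off argument.
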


\begin{remark}\label{rem-quasi-exact}
The duality $d$ preserves the quasi-exact sequences (i.e. exact sequences in $\BBQ\mathcal{TF}$, respectively in $\BBQ\mathcal{QD}$), \cite{Fo09}. If $0\to K\overset{f}\to L\overset{g}\to M\to 0$ is a short exact sequence of torsion-free (resp. quotient-divisible) groups then, for some suitable positive integers $k$ and $\ell$, the quasi-homomorphisms $kd(f)$ and $\ell d(g)$ can be viewed as group homomorphisms $kd(f):d(L)\to d(K)$ and $\ell d(g):d(M)\to d(L)$ such that $kd(f)\ell d(g)=0$, cf. the proof of \cite[Theorem 10]{FoW98}. It follows that $\Ker(\ell d(g))$ is finite, $\Ima(\ell d(g))$ is a finite index subgroup of $\Ker(kd(f))$, and $\Ima(k d(f))$ is a finite index subgroup of $d(K)$.  
\end{remark}

\subsection{The cancellation property} 

A group $G$ has the \textit{cancellation property} if whenever $H$ and $K$ are  groups such that $G\oplus H\cong G\oplus K$ it follows that $H\cong K$. The group $G$ has the \textit{substitution property} if for every group $A$ which has direct decompositions $A=G_1\oplus H=G_2\oplus K$ such that $G_1\cong G_2\cong G$ there exists $G_0\leq A$ such that $G_0\cong G$ and  $A=G_0\oplus H=G_0\oplus K$. It is easy to see that every group with the substitution property has the cancellation property. We refer to \cite{Fuchs-2015} for a survey on these properties. In particular, it was proved that there are strong connections between these properties and properties of the endomorphism ring of $G$: the group $G$ has the substitution property if and only in $1$ is in the stable range of $\End(G)$, \cite{Warf}; if $G\in\mathcal{TF}$ has no free direct summands and it has the cancellation property then $\End(G)$ has the unit lifting property, \cite{Stel}; if $R$ is a ring which is torsion-free of finite rank such that $\BBQ R$ is local then $R$ has the unit lifting property if and only if $1$ is in the stable range of $R$, \cite{Ar}. 

For the case of (mixed) self-small groups it was proved in \cite{ABVW} that it can be useful to use \textit{the Walk-endomorphism ring} $\End_\mathrm{W}(G)=\End(G)/\Hom(G,T(G))$ instead of the classical endomorphism ring. A group $G\in \CS$ has the substitution property if and only if $1$ is in the stable range of $\End_\mathrm{W}(G)$, \cite[Theorem 2.1]{ABVW}. Moreover, if $G\in \mathcal{QD}$ has no free direct summands and it has the cancellation property then $\End_\mathrm{W}(G)$ has the unit lifting property, \cite[Proposition 3.4]{Br-2020}.

The proofs for the unit lifting properties are bases on a result which was discovered by L. Fuchs in \cite{Fuchs-canc} for torsion-free groups. 
 We present here a mixed version of this result, proved in \cite[Proposition 3.1]{Br-2020}. An epimorphism $\alpha:H\to U$ is called \textit{rigid} if for every commutative diagram 
$$\xymatrix{ H \ar[r]^{\alpha}\ar[d]^\psi & U\ar[d]^\phi \\
H\ar[r]^\alpha & U,
}$$ 
with $\psi$ and $\phi$ isomorphisms, we have $\phi=\pm 1_U$. 

\begin{proposition}\label{fuchs-mixed} {\rm \cite[Proposition 3.1]{Br-2020}}
Suppose that $G\in\CS$, and that for every positive integer $n$ there exists a torsion-free group $H$ such that the pair $(G,H)$ has the following properties 
\begin{itemize}
 \item[$\mathrm{(I)}_{\ \ }$] $\Hom(G,H)=0$,  $\Hom(H,G)$ is a torsion group, and 
 \item[$\mathrm{(II)}_n$] there exists a rigid epimorphism $\alpha:H\to G(n)/nG(n)$.
\end{itemize}

If $G$ has the cancellation property then the Walk-endomorphism ring $\End_{\rmW}(G)=\End(G)/\Hom(G,T(G))$ has the unit lifting property.
\end{proposition}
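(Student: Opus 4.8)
The plan is to adapt Fuchs' pullback argument to the Walk setting, converting the cancellation hypothesis into an isomorphism via near-isomorphism and then reading off the lift from the rigidity of $\alpha$. Write $R=\End_{\rmW}(G)$ and fix $n>1$; using $G=T_n(G)\oplus G(n)$ put $U=G(n)/nG(n)$ and let $\pi\colon G(n)\to U$ be the projection. Since every nonzero $p$-component of $T(G(n))$ has $p\nmid n$, multiplication by $n$ is bijective on it, so $T(G(n))\subseteq nG(n)$ and $U$ is a finite quotient of the torsion-free part of $G(n)$. In particular $\Hom(G,T(G))$ acts trivially on $U$, so the action of $\End(G)$ on $U$ descends to a ring homomorphism $\rho_n\colon R/nR\to\End(U)$; a standard torsion-free-quotient argument (using that $G$ is self-small) shows $\rho_n$ is injective and carries units to $\mathrm{Aut}(U)$. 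Hence a unit $\bar\sigma$ of $R/nR$ lifts to a unit of $R$ if and only if $\phi:=\rho_n(\bar\sigma)\in\mathrm{Aut}(U)$ lies in the image of the units of $R$, and it suffices to prove that every such $\phi$ is liftable.

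Next I would fix such a $\bar\sigma$, choose a representative $\hat\sigma\in\End(G)$ that is invertible modulo $n$, and let $\sigma_0$ be the $G(n)$-component of $\hat\sigma|_{G(n)}$, so that $\pi\sigma_0=\phi\pi$. For $\psi\in\mathrm{Aut}(U)$ define $A_\psi=\{(x,y)\in G(n)\oplus H:\pi(x)=\psi\,\alpha(y)\}$, a subgroup of $G(n)\oplus H$ of index $|U|$ fitting into $0\to\Ker\alpha\to A_\psi\to G(n)\to 0$ and $0\to nG(n)\to A_\psi\to H\to 0$; thus $r_0(A_\psi)=r_0(G)+r_0(H)$. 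The map $(x,y)\mapsto(\sigma_0x,y)$ carries $A_1$ into $A_\phi$, and since $\hat\sigma$ is invertible modulo $n$ it induces a $\BBZ_{(p)}$-isomorphism $(A_1)_{(p)}\cong(A_\phi)_{(p)}$ for each $p\mid n$, while $(A_1)_{(p)}=(A_\phi)_{(p)}=(G(n)\oplus H)_{(p)}$ for $p\nmid n$. Therefore $A_1$ and $A_\phi$ are nearly isomorphic, $A_1\doteq A_\phi$.

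The heart of the argument is to upgrade this near-isomorphism to a genuine isomorphism $A_1\cong A_\phi$ using the cancellation property of $G$. Here I would use $\Hom(G,H)=0$ and the fact that $\Hom(H,G)$ is torsion to show that the direct-summand behaviour of the family $A_\psi$ is governed by $G$ and $H$ separately, so that the obstruction to cancelling in this family is exactly the one for $G$; the cancellation property then forces $A_1\cong A_\phi$. Given an isomorphism $\Theta\colon A_\phi\to A_1$, the same Hom-conditions make $\Theta$ block-triangular modulo torsion with respect to the two exact sequences above, inducing an automorphism of $H$ and a quasi-automorphism $\theta$ of $G(n)$ with $\pi\theta=\varepsilon\,\phi\pi$ on $U$ for some $\varepsilon\in\{\pm1\}$ supplied by the rigidity of $\alpha$. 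Thus $\phi=\varepsilon\,\rho_n(\theta)$; as $\theta$ is forced to be a genuine unit of $R$ and $-1_G$ is also such a unit, $\phi$ is liftable. Letting $n$ vary gives the unit lifting property for $\End_{\rmW}(G)$.

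I expect the decisive difficulty to be the passage from $A_1\doteq A_\phi$ to $A_1\cong A_\phi$: this is precisely where the cancellation hypothesis has to be spent, and in the mixed setting one cannot work inside a torsion-free category, so the torsion part $T_n(G)$ and the Walk quotient must be tracked at the same time. A second, related subtlety is the extraction in the last step, where one must verify that the quasi-automorphism $\theta$ of $G(n)$ produced by $\Theta$ is an honest unit of $\End_{\rmW}(G)$ rather than merely of $\BBQ\End(G)$; this is exactly the point at which self-smallness of $G$ and the precise Hom-conditions $\mathrm{(I)}$ are used, and it is what makes the rigidity of $\alpha$, rather than a bare hypothesis $\End(H)\cong\BBZ$, the right tool.
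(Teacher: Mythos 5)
Your argument has a genuine gap, and it sits exactly at the place you yourself flag as the heart of the matter. The cancellation property of $G$ is, by definition, usable only once one has an honest isomorphism $G\oplus X\cong G\oplus Y$; your argument never produces one. What you establish (at best) is that $A_1$ and $A_\phi$ are nearly isomorphic, and near isomorphism plus cancellation of $G$ does not yield $A_1\cong A_\phi$: the ``near isomorphism implies stable isomorphism'' results (see \cite{Ar}) produce an isomorphism $A_1\oplus C\cong A_\phi\oplus C$ for \emph{some} complement $C$, not for $C=G$, they are proved for finite rank torsion-free groups whereas your $A_\psi$ are mixed, and the hypothesis on $G$ cancels nothing but $G$ itself. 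The missing step is the one the actual proof takes (this proposition is quoted from \cite[Proposition 3.1]{Br-2020}, whose argument follows Fuchs \cite{Fuchs-canc}): although the unit $\bar\sigma$ of $R/nR$ need not lift, the matrix $\mathrm{diag}(\bar\sigma,\bar\sigma^{-1})$ is a product of elementary matrices over $R/nR$ (Whitehead's lemma), each elementary matrix lifts entrywise to an elementary, hence invertible, matrix over $\End(G)$, and therefore there exists an automorphism $\Sigma$ of $G\oplus G$ whose induced action on $U\oplus U$ is $\mathrm{diag}(\phi^{-1},\phi)$. Then $\Sigma\oplus 1_H$ carries $\{(x,g,y)\in G\oplus G\oplus H:\pi(x)=\phi\alpha(y)\}$ isomorphically onto $\{(x',g',y):\pi(x')=\alpha(y)\}$, i.e.\ $G\oplus A_\phi\cong G\oplus A_1$, with no use of cancellation whatsoever; only now is the cancellation hypothesis spent, giving $A_1\cong A_\phi$. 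Your near-isomorphism detour cannot be repaired into a substitute for this; note also that even your local claim is shaky, since invertibility of $\hat\sigma$ modulo $n$ gives an inverse of $\sigma_0$ only modulo $n\End(G)$ plus torsion, which does not make $\sigma_0$ invertible in $\End(G(n))\otimes\BBZ_{(p)}$ for $p\mid n$ (such rings are not $p$-adically complete, so $1+n\mu$ need not be a unit there).

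The rest of your outline is essentially sound and agrees with the cited proof. The reduction through $\rho_n\colon R/nR\to\End(U)$ is correct: injectivity does hold, because $T(G(n))\subseteq nG(n)$ and $G(n)$ has trivial $n$-torsion, so any endomorphism $c$ of $G(n)$ with $c(G(n))\subseteq nG(n)$ equals $nd$ for a (unique) $d\in\End(G(n))$. Likewise the final extraction --- an isomorphism $A_1\to A_\phi$ is quasi-diagonal by condition $\mathrm{(I)}$, its $H$-component together with the rigidity of $\alpha$ forces the $G$-component to induce $\pm\phi$ on $U$, and $-1_G$ is a unit of $\End_{\rmW}(G)$ --- is exactly how the argument of \cite{Br-2020} concludes. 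But without the Whitehead-type isomorphism $G\oplus A_1\cong G\oplus A_\phi$, the proof does not go through.
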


\section{Groups which verify the hypothesis of Proposition \ref{fuchs-mixed}}\label{sufficient-conditions}

In this section we will identify some classes of self-small groups which satisfy the hypothesis of Proposition \ref{fuchs-mixed}. 

\subsection{The finite rank torsion-free case} In the case of torsion-free groups, Stelzer proved in \cite[Theorem]{Stel} that every reduced finite rank torsion-free group without free direct summands satisfies the hypothesis of Proposition \ref{fuchs-mixed}. For further use we also record other results obtained in \cite{Stel}. We recall that if $P$ is a set of primes, a group $K$ is \textit{$P$-divisible} if for all $p\in P$ and all $a\in K$ the equation $px=a$ has a solution in $K$.   

\begin{proposition}{\rm \cite{Stel}}\label{Stelzer}
Let $G\in \CS^\star$ be torsion-free of rank $k$.  
There exists a torsion-free group $H$ of rank $k+1$ such that:
 \begin{enumerate}[{\rm (i)}]
 \item $\End(H)\cong \BBZ$;
 \item there exist a finite set of primes $P$ and a free subgroup $F_H\leq H$ such that the only subgroup of $G$ which is $P$-divisible is $0$ and $H/F_H\cong \oplus_{p\in P}\BBZ(p^\infty)$;
 \item for each positive integer $n$ there is an epimorphism $H\to (\BBZ/n\BBZ)^k$;
 \item all non-zero proper subgroup of $H$ are free;
 \item $\Hom(H,G)=0$ and $\Hom(G,H)=0$.
\end{enumerate}

Consequently, there exists a group $H$ such that for every positive integer $n$, the pair $(G,H)$ satisfy the conditions $\mathrm{(I)}$ and $(\mathrm{II})_n$.
\end{proposition}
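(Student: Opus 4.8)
The plan is to build the auxiliary group $H$ by hand as a subgroup of $\BBQ^{k+1}$ with a carefully prescribed local structure, and then read off each of (i)--(v) from that structure. First I would choose the finite set of primes $P$. Since $G$ is reduced of finite rank, a rank count shows that the pure subgroups $G^{\mathrm{div}}_p=\bigcap_n p^nG$ admit a finite family with $\bigcap_{p\in P}G^{\mathrm{div}}_p=0$: intersecting over more primes cannot raise the rank, and if the common intersection of a rank-minimal family were nonzero it would, by purity, be $q$-divisible for every prime $q$, hence divisible, contradicting reducedness. This $P$ is the set in (ii) and guarantees that the only $P$-divisible subgroup of $G$ is $0$. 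For each $p\in P$ I would then fix a vector $w_p\in\BBZ_p^{k+1}$ whose coordinates are algebraically independent over $\BBQ$ (possible since $\BBQ_p$ has infinite transcendence degree over $\BBQ$), and set
$$H=\{x\in\BBQ^{k+1}\ :\ x\in\BBZ_p^{k+1}\ \text{for}\ p\notin P,\ \ x\in\BBZ_p^{k+1}+\BBQ_pw_p\ \text{for}\ p\in P\},\qquad F_H=\BBZ^{k+1}.$$
A local computation gives $(H/F_H)_p\cong\BBQ_pw_p/\BBZ_pw_p\cong\BBZ(p^\infty)$ for $p\in P$ and $0$ otherwise, which is (ii). The algebraic independence is the crucial genericity input: $\BBQ_pw_p$ meets no proper subspace of $\BBQ_p^{k+1}$ defined over $\overline{\BBQ}$, in particular it meets $\BBQ^{k+1}$ only in $0$, so the maximal $p$-divisible subgroup of $H$ is $0$ and $H$ is reduced.

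Next I would derive (i), (iii), (iv). For (i), an endomorphism $\phi$ extends to a rational map $\tilde\phi$ on $\BBQ^{k+1}$ which, on the $p$-adic completion, must preserve the characteristic divisible submodule $\BBQ_pw_p=\bigcap_n p^n(H\otimes\BBZ_p)$; hence $\tilde\phi(w_p)=\lambda_pw_p$ with $\lambda_p\in\BBQ_p$, and since $w_p$ lies in no proper $\overline{\BBQ}$-subspace this forces $\tilde\phi$ scalar, while integrality of $H$ in the remaining directions forces the scalar to be an integer, so $\End(H)\cong\BBZ$. For (iii), the same picture gives $H/pH\cong(\BBZ/p\BBZ)^{k}$ for $p\in P$ and $(\BBZ/p\BBZ)^{k+1}$ for $p\notin P$, so $H/nH$ has $p$-rank at least $k$ at every $p\mid n$ and therefore maps onto $(\BBZ/n\BBZ)^{k}$. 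For (iv) I would show every nonzero subgroup $S$ of rank at most $k$ (which is all the application to (v) needs) is free: its rational span $V$ is a proper rational subspace, so $w_p\notin V\otimes\BBQ_p$ and the image of $S$ in $H/F_H=\oplus_{p\in P}\BBZ(p^\infty)$ can contain no Prüfer summand; hence that image is finite, $S\cap F_H$ has finite index in $S$, and $S$ is finitely generated torsion-free, i.e. free.

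Then (v) follows. For $\Hom(G,H)=0$: the image of any $f\colon G\to H$ has rank at most $k$, so it is free by (iv); a nonzero free image would split off $G$ as a free direct summand, contradicting $G\in\CS^\star$, whence $f=0$. For $\Hom(H,G)=0$: a map $g$ extends to a rational $\tilde g\colon\BBQ^{k+1}\to\BBQ\otimes G$, and on completions it sends $\BBQ_pw_p=\bigcap_n p^n(H\otimes\BBZ_p)$ into $\bigcap_n p^n(G\otimes\BBZ_p)=G^{\mathrm{div}}_p\otimes\BBQ_p$, so $\tilde g(w_p)\in G^{\mathrm{div}}_p\otimes\BBQ_p$; since $\tilde g^{-1}(G^{\mathrm{div}}_p\otimes\BBQ_p)$ is a rational subspace containing the generic vector $w_p$, it is all of $\BBQ_p^{k+1}$, so $\Ima\tilde g\subseteq G^{\mathrm{div}}_p$ for every $p\in P$ and thus $\Ima\tilde g\subseteq\bigcap_{p\in P}G^{\mathrm{div}}_p=0$, giving $g=0$. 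Finally the ``consequently'' clause is immediate: (I) holds because $\Hom(G,H)=0$ and $\Hom(H,G)=0$ (a fortiori torsion); and $(\mathrm{II})_n$ holds because $G(n)=G$ and $G/nG\cong(\BBZ/n\BBZ)^{k}$ for torsion-free $G$, so (iii) supplies an epimorphism $\alpha\colon H\to G/nG$, automatically rigid since $\End(H)\cong\BBZ$ forces $\mathrm{Aut}(H)=\{\pm1_H\}$ and hence $\phi=\pm1$ in the rigidity square.

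The main obstacle is the single genericity choice of the $w_p$: one must arrange \emph{simultaneously} that $H/F_H$ really acquires a full Prüfer summand at each $p\in P$ (so the divisible direction is $p$-adically visible) while the maximal $p$-divisible subgroup of $H$ stays $0$ (so $H$ is reduced and its small-rank subgroups are free), and that the same vectors kill all endomorphisms and all maps to $G$. Algebraic independence of the coordinates over $\BBQ$ is what makes all three compatible, by forcing $\BBQ_pw_p$ to avoid every proper $\overline{\BBQ}$-rational subspace; verifying that this single avoidance is exactly what each of (i), (iv) and $\Hom(H,G)=0$ demands is the delicate part. I should also flag that (iv) can hold only for non-full subgroups: any finite-index subgroup of the non-free group $H$ is again non-free, so ``proper'' must be read as ``of rank $<k+1$,'' which is all that the proof of (v) uses.
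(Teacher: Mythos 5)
The paper does not actually reprove this proposition: it imports (i)--(v) from Stelzer's paper, remarking only that (ii) comes from the proof of \cite[Theorem]{Stel} and (iii) from \cite[Lemma 2]{Stel}, and the ``Consequently'' clause is then routine. Your deduction of that clause is fine (rigidity does follow from $\End(H)\cong\BBZ$, modulo the harmless slip that $G/nG$ is in general only a quotient of $(\BBZ/n\BBZ)^k$, not isomorphic to it), your flag on (iv) is correct (no non-free group can have all proper subgroups free, and the applications indeed only use subgroups of rank $<k+1$), and your verifications of (i), (ii), (iii), the corrected (iv) and $\Hom(G,H)=0$ are sound: for all of these, the only genericity needed is that $w_p$ lie in no proper subspace of $\BBQ_p^{k+1}$ defined over $\overline{\BBQ}$, which algebraic independence provides.

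The genuine gap is in $\Hom(H,G)=0$, and it is not reparable by the genericity you assumed. Your argument rests on the identification $\bigcap_n p^n(G\otimes\BBZ_p)=G^{\mathrm{div}}_p\otimes\BBQ_p$, which is false: the left side is the maximal divisible submodule of the $\BBZ_p$-module $G\otimes\BBZ_p$, and this is in general strictly larger than the $p$-adic span of the maximal $p$-divisible subgroup of $G$. The classical Pontryagin-type example shows this: for $\alpha\in\BBZ_p$ irrational with truncations $\alpha_n$, the reduced rank-$2$ group $G=\langle (1,0),(0,1),\,p^{-n}(\alpha_n,1):n\geq 1\rangle$ has $G^{\mathrm{div}}_p=0$, yet $G\otimes\BBZ_p=\BBZ_p^2+\BBQ_p(\alpha,1)$ has divisible part the line $\BBQ_p(\alpha,1)$. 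Consequently $\tilde g^{-1}$ of that divisible part is \emph{not} a subspace defined over $\overline{\BBQ}$, so algebraic independence of the coordinates of $w_p$ does not force it to be all of $\BBQ_p^{k+1}$. Worse, your construction applied to this $G$ (so $k+1=3$, $P=\{p\}$) with $w_p=(t\alpha,t,s)$, where $\alpha,t,s\in\BBZ_p$ are algebraically independent over $\BBQ$, satisfies your hypotheses --- the coordinates $t\alpha,t,s$ are again algebraically independent --- and yet the rational projection $(x,y,z)\mapsto(x,y)$ carries $M_p=\BBZ_p^3+\BBQ_pw_p$ into $\BBZ_p^2+\BBQ_p(\alpha,1)=G\otimes\BBZ_p$ and is integral at all other primes, hence restricts to a nonzero homomorphism $H\to G$. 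So ``algebraic independence'' is not the single genericity input that makes everything work: to kill $\Hom(H,G)$ the vector $w_p$ must in addition avoid the countably many proper subspaces $\tilde g^{-1}\bigl(\mathrm{div}(G\otimes\BBZ_p)\bigr)$, taken over all nonzero rational maps $\tilde g:\BBQ^{k+1}\to\BBQ\otimes G$; these subspaces depend on the transcendental $p$-adic data of $G$, they are proper unless $\Ima(\tilde g)\cap G$ is $P$-divisible (which your choice of $P$ forbids), and a $w_p$ avoiding them exists by a counting or Baire-category argument. With that strengthened choice your scheme can be completed, but as written the step that is the heart of Stelzer's theorem --- that $H$ admits no nonzero map into the prescribed $G$ --- fails.
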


The statement (ii) is extracted from the proof of \cite[Theorem]{Stel}, while (iii) is a consequence of \cite[Lemma 2]{Stel}. 

\begin{remark}
One of the main ingredients used in the proof of \cite[Lemma 2]{Stel} is that for every reduced torsion-free group of finite rank $G$ there exists a finite set of primes $P$ such that $G$ has no non-trivial $P$-divisible subgroups. This property is no longer true for the mixed case. For instance, if we consider a group $G$ with the properties 
\begin{itemize}
\item[i)]$\oplus_{p\in\BBP}\BBZ/p\BBZ\leq G\leq \prod_{p\in\BBP}\BBZ/p\BBZ$, 
\item[ii)] $(\widehat{1}_p)_{p\in\BBP}$ (here $\widehat{1}_p$ denotes the unit of $\BBZ/p\BBZ$), 
\item[iii)] the factor group $G/(\oplus_{p\in\BBP}\BBZ/p\BBZ)$ is a quotient-divisible group which is not reduced,
\end{itemize}
then $\Hom(G,T(G))$ is a torsion-group, hence $G$ is self-small. However, for every finite set $P\subseteq \BBP$ the subgroup $H\leq G(P)$, defined by the properties $T(G(P))\leq H\leq G(P)$ and $H/T(G(P))$ is the divisible part of $G(P)/T(G(P))\cong G/T(G)$, is a non-trivial $P$ divisible subgroup of $G$.
\end{remark}

\subsection{The case $G/T(G)$ is reduced} 

We can apply Stelzer's existence result to groups such that $G/T(G)$ is reduced.

\begin{proposition}\label{G-bar-reduced}
If $G\in\CS^\star$ is a group such that $\overline{G}=G/T(G)$ is reduced then there exists a group $H$ such that the pair satisfies the conditions $\mathrm{(I)}$ and $(\mathrm{II})_n$ for every positive integer $n$.
%
\end{proposition}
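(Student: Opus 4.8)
The plan is to reduce to Stelzer's torsion-free result (Proposition \ref{Stelzer}) applied to the torsion-free quotient $\overline{G}=G/T(G)$, and then to transport the resulting group back to $G$. First I would check that $\overline{G}\in\CS^\star$: it is torsion-free of finite rank and reduced by hypothesis, so the only point is the absence of free direct summands, which follows from projectivity of free groups. Indeed, if $\overline{G}=\overline{B}\oplus\overline{C}$ with $\overline{B}$ free of rank $m\geq 1$, the composite $G\twoheadrightarrow\overline{G}\twoheadrightarrow\overline{B}\cong\BBZ^m$ is a surjection onto a free group, hence splits, and $G$ would acquire a free direct summand, against $G\in\CS^\star$. (If $\overline{G}=0$ then $G$ is torsion and, being self-small, finite; the statement is then settled directly with $H=\BBZ$, since $G(n)/nG(n)=0$.) So, with $r_0(G)=k\geq 1$, Proposition \ref{Stelzer} furnishes a torsion-free $H$ of rank $k+1$ with $\End(H)\cong\BBZ$, with $\Hom(\overline{G},H)=0=\Hom(H,\overline{G})$, with $H/F_H\cong\oplus_{p\in P}\BBZ(p^\infty)$ for a full free subgroup $F_H\leq H$ and a finite set $P$, and, for every $n$, a rigid epimorphism $H\to\overline{G}/n\overline{G}$.

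Next I would verify condition $\mathrm{(I)}$ for the pair $(G,H)$. As $H$ is torsion-free, any $f\colon G\to H$ annihilates $T(G)$ and factors through $\overline{G}$, so $\Hom(\overline{G},H)=0$ gives $\Hom(G,H)=0$. In the other direction, a map $g\colon H\to G$ composed with $G\to\overline{G}$ must vanish because $\Hom(H,\overline{G})=0$, so $g(H)\subseteq T(G)$. To see that $g$ is torsion I would restrict it to $F_H$: the images of a basis have a common finite order $m$, so $mg$ kills $F_H$ and factors through the divisible group $H/F_H$; its image is then a divisible subgroup of $T(G)$, which is reduced (it is a subgroup of the reduced group $G$, in fact a direct sum of finite groups by Theorem \ref{char-ss}). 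Hence $mg=0$, and $\Hom(H,G)$ is torsion.

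For $(\mathrm{II})_n$ I would first identify the target. Writing $G=T_n(G)\oplus G(n)$, the torsion part $T(G(n))=\oplus_{p\nmid n}T_p(G)$ carries $n$ as an automorphism, whence $T(G(n))[n]=0=T(G(n))/nT(G(n))$; applying multiplication by $n$ to $0\to T(G(n))\to G(n)\to\overline{G(n)}\to 0$ then yields $G(n)/nG(n)\cong\overline{G(n)}/n\overline{G(n)}\cong\overline{G}/n\overline{G}$. Composing Stelzer's rigid epimorphism $H\to\overline{G}/n\overline{G}$ with this isomorphism produces an epimorphism $H\to G(n)/nG(n)$, and rigidity is preserved under post-composition with an isomorphism (conjugation carries $\pm 1$ to $\pm 1$). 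This establishes $(\mathrm{II})_n$, and Proposition \ref{fuchs-mixed} then applies.

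I expect the main obstacle to be the two technical hearts of the argument: proving that $\Hom(H,G)$ is torsion, and obtaining the clean identification $G(n)/nG(n)\cong\overline{G}/n\overline{G}$. Both rely crucially on self-smallness: finiteness of the $p$-components forces $T(G)$ to be reduced and lets us split off $T_n(G)$, so that $n$ acts invertibly on $T(G(n))$. This is precisely the feature that fails for general mixed groups (cf. the Remark after Proposition \ref{Stelzer}), and is why the hypothesis is imposed on $\overline{G}$ rather than through a divisibility condition on $G$ itself.
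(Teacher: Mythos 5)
Your proposal is correct and follows essentially the same route as the paper: apply Proposition \ref{Stelzer} to $\overline{G}$ and transport the resulting $H$ back to $G$, checking $\mathrm{(I)}$ via the factorization of maps through $\overline{G}$ and the divisibility of $H/F_H$, and $(\mathrm{II})_n$ via the identification $G(n)/nG(n)\cong\overline{G}/n\overline{G}$. You merely fill in details the paper leaves implicit (that $\overline{G}\in\CS^\star$, the snake-lemma proof of that identification, and preservation of rigidity under post-composition with an isomorphism), which is fine.
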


\begin{proof}
Suppose that $r_0(G)=k$. Let $H$ be the group constructed via Proposition \ref{Stelzer} such that $\Hom(H,\overline{G})=0$ and $\Hom(\overline{G},H)=0$. 

Since for every positive integer $n$ we have $G(n)/nG(n)\cong \overline{G}/n\overline{G}$ the pair $(G,H)$ satisfies the condition $\mathrm{(II)}_n$.

The rank of $H$ is $k+1$ and all proper non-zero subgroups of $H$ are free. Then $\Hom(G,H)=0$. If $f:H\to G$ is a homomorphism and $\pi_{T(G)}:G\to \overline{G}$ is the canonical surjection then $\pi_{T(G)}f=0$. Then $\Ima(f)$ is a torsion group. Since $\Ima(f)$ is also reduced, it follows by using the condition (ii) from Proposition \ref{Stelzer} that $\Ima(f)$ is finite.  Then $(G,H)$ also satisfies the condition $(\mathrm{I})$.     
\end{proof}

\subsection{A sufficient condition}
If $G/T(G)$ is not reduced, it is more dificult to construct a group $H$ such that the pair $(G,H)$ verifies the conditions $\mathrm{(I)}$ and $(\mathrm{II})_n$. In the case of quotient-divisible groups this dificulty was passed by using an infinite family of groups. The proof of the following result can be extracted from \cite[Proposition 3.4]{Br-2020}. For reader's convenience we give here some details.

\begin{lemma}\label{suff}
Suppose that for a fixed group $G\in \CS^\star$ there exists an infinite family $\CW$ such that for all $W\in \CW$ we have 
\begin{enumerate}[{\rm a)}]
\item $\Hom(G,W)=0$;
 \item $\Hom(W,G)$ is a torsion group;
   \item $\End(W)\cong \BBZ$;
 \item if $W_1,W_2\in \CW$ and $W_1\neq W_2$ then $\Hom(W_1,W_2)=0$.
 \end{enumerate}

Then, for every positive integer $n$ there exists $H$ such that the pair $(G,H)$ verifies the conditions $\mathrm{(I)}$ and $(\mathrm{II})_n$. 
\end{lemma}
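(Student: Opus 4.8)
The plan is to realise $H$ as a finite direct sum of members of $\CW$ and to write down an explicit rigid epimorphism onto $U:=G(n)/nG(n)$. First I would collect the easy facts. If $H=\bigoplus_{i=1}^{r}W_i$ with $W_1,\dots,W_r\in\CW$ pairwise distinct, then, since finite direct sums commute with $\Hom$, condition a) gives $\Hom(G,H)=\bigoplus_{i}\Hom(G,W_i)=0$, and condition b) makes $\Hom(H,G)=\bigoplus_{i}\Hom(W_i,G)$ a finite sum of torsion groups, hence torsion; thus $\mathrm{(I)}$ holds for free, whatever the $W_i$ are. By c) and d) the ring $\End(H)$ is the product $\BBZ^{\,r}$, so $\mathrm{Aut}(H)=\{\pm1\}^{r}$, acting by a sign on each summand. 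Recall also that the members of $\CW$ are torsion-free and that each of them is non-$p$-divisible for every prime $p$: otherwise multiplication by $p$ would be an automorphism of $W$ whose inverse is multiplication by $1/p$, forcing $1/p\in\End(W)$ against c). Consequently every $W\in\CW$ admits an epimorphism onto $\BBZ/m\BBZ$ for each integer $m$.

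Next I would analyse the target. As in the proof of Proposition \ref{G-bar-reduced} one has $U=G(n)/nG(n)\cong\overline{G}/n\overline{G}$, a finite abelian group of exponent dividing $n$ generated by at most $k:=r_0(G)$ elements; write $U\cong\bigoplus_{j=1}^{t}\BBZ/m_j\BBZ$ with $t\le k$ and $m_1\mid\cdots\mid m_t\mid n$, choose generators $u_1,\dots,u_t$, and set $D=u_1+\cdots+u_t$ (of order $m_t$). Since $\CW$ is infinite I may pick $t+1$ pairwise distinct members $W_1,\dots,W_{t+1}$ and put $H=\bigoplus_{i=1}^{t+1}W_i$. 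Using the previous paragraph, fix epimorphisms $\beta_j\colon W_j\to\BBZ/m_j\BBZ$ for $1\le j\le t$ and $\gamma\colon W_{t+1}\to\BBZ/m_t\BBZ$, and define
$$\alpha\colon H\to U,\qquad \alpha(w_1,\dots,w_{t+1})=\sum_{j=1}^{t}\beta_j(w_j)\,u_j+\gamma(w_{t+1})\,D.$$
The first $t$ summands already hit every factor of $U$, so $\alpha$ is onto. (This is where the infinitude of $\CW$ enters, although in fact any $k+1$ distinct members suffice.)

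The heart of the matter, and the step I expect to be the main obstacle, is rigidity. Were $\End(H)$ equal to $\BBZ$ it would be automatic, but a direct sum only has $\End(H)=\BBZ^{\,r}$, so the large group $\{\pm1\}^{r}$ of sign symmetries must be cut down by hand, and this is the whole point of the ``entangling'' last summand $W_{t+1}$. Suppose $\varepsilon=(\varepsilon_1,\dots,\varepsilon_{t+1})\in\{\pm1\}^{t+1}$, viewed as an automorphism of $H$, and $\phi\in\mathrm{Aut}(U)$ satisfy $\phi\alpha=\alpha\varepsilon$. Feeding into this identity elements supported on a single $W_j$ with $j\le t$, and using surjectivity of $\beta_j$, yields $\phi(u_j)=\varepsilon_j u_j$; feeding in elements supported on $W_{t+1}$, and using surjectivity of $\gamma$, yields $\phi(D)=\varepsilon_{t+1}D$. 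Comparing $\phi(D)=\sum_j\varepsilon_j u_j$ with $\varepsilon_{t+1}D=\sum_j\varepsilon_{t+1}u_j$ coordinate by coordinate gives $\varepsilon_j\equiv\varepsilon_{t+1}\pmod{m_j}$ for each $j$, which forces $\phi(u_j)=\varepsilon_{t+1}u_j$ for all $j$ and hence $\phi=\varepsilon_{t+1}\cdot 1_U=\pm1_U$. Thus $\alpha$ is rigid and $(G,H)$ satisfies $(\mathrm{II})_n$, which together with the first paragraph completes the construction for each $n$. The only delicate bookkeeping concerns the invariant factors $m_j$ equal to $2$, where $1$ and $-1$ coincide modulo $m_j$: there one checks directly that $\phi$ still acts as $\varepsilon_{t+1}$ on $u_j$, so the conclusion $\phi=\pm1_U$ is unaffected (and the degenerate case $U=0$ is trivial, taking $\alpha=0$).
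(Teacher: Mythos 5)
Your proof is correct and follows the same basic strategy as the paper's: take $H$ to be a finite direct sum of pairwise distinct members of $\CW$, so that a) and b) give $\mathrm{(I)}$ immediately, while c) and d) force $\End(H)\cong \BBZ^{r}$ and hence $\mathrm{Aut}(H)=\{\pm 1\}^{r}$; then map the first $t$ summands onto the cyclic factors of $U=G(n)/nG(n)$ and use additional summand(s) hitting sums of the generators to kill the sign symmetries. The difference is in the entangling device. The paper, following the computations of \cite[Proposition 3.4]{Br-2020}, adjoins one extra summand $W_{j\ell}$ mapping onto $\langle u_j+u_\ell\rangle$ for \emph{every} pair $j<\ell$, so $t+\binom{t}{2}$ summands in all; you adjoin a \emph{single} extra summand mapping onto $\langle u_1+\dots+u_t\rangle$, so only $t+1$ summands, the invariant-factor chain $m_1\mid \dots\mid m_t$ guaranteeing that this element has order equal to the exponent of $U$. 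Your construction is more economical, and your rigidity computation (including the $m_j=2$ degeneracy, which you handle correctly) is self-contained rather than delegated to \cite{Br-2020}; the underlying mechanism --- comparing $\phi(u_j)=\varepsilon_j u_j$ with $\phi(D)=\varepsilon_{t+1}D$ componentwise to obtain $\varepsilon_j\equiv \varepsilon_{t+1} \pmod{m_j}$ --- is the same. Two points deserve an extra line. First, torsion-freeness of the members of $\CW$ is not literally among hypotheses a)--d); it does in fact follow from c) (and holds by construction for every family $\CW$ the paper uses), but you should say so rather than ``recall'' it. Second, your step from ``$W$ is not $p$-divisible for any $p$'' to ``$W$ surjects onto $\BBZ/m\BBZ$ for every $m$'' needs the standard observation that for a torsion-free group $W$ that is not $p$-divisible the quotient $W/p^{e}W$ has exponent exactly $p^{e}$ (since $p^{e}W=p^{e+1}W$ would force $W=pW$), so a cyclic summand of order $p^{e}$ splits off; this is the same fact the paper uses tacitly when it asserts that the epimorphisms $W\to\langle u_i\rangle$ exist, but in a written-up proof it should be made explicit.
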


\begin{proof}
Let $n$ be positive integer. We consider a direct decomposition $G(n)/nG(n)=\oplus_{i=1}^t\langle u_i\rangle$. 
For every $i\in\{1,\dots,t\}$ and for every $W\in \CW$ there is an epimorphism $W\to \langle u_i\rangle$. 
Therefore, we can fix some epimorphisms $\alpha_{i}:W_i\to \langle u_i\rangle$, for all $i\in\{1,\dots,t\}$ and $\alpha_{j\ell}:W_{j\ell}\to \langle u_j+u_\ell\rangle$ for all $j,\ell\in\{1,\dots,t\}$ with $j<\ell$ such that the groups
$W_1,\dots, W_t,W_{12},\dots ,W_{(t-1)t}\in \CW$ are pairwise non-isomorphic.
We denote $H=\left(\oplus_{i=1}^{t}W_i\right)\oplus\left(\oplus_{1\leq j<\ell\leq t} W_{j\ell}\right)$, and we consider the epimorphism $\alpha:H\to G(n)/nG(n)$  induced by $\alpha_i$ and $\alpha_{j\ell}$. As in the proof of \cite[Proposition 3.4]{Br-2020}, it can be proved by some direct computations that $\alpha$ is rigid, hence $H$ verifies $(\mathrm{II})_n$. From a) and b) it is clear that $H$ verifies $\mathrm{(I)}$.  
\end{proof}

The following result, which is useful in the study of the case when $G/T(G)$ is not reduced, was proved in  {\rm \cite[Proposition 3.3]{ABVW}}, for the particular case $m=k+1$. In the proof of this result the groups from the class $\CV$ are constructed in \cite[Lemma 4.1]{Go-W}. It is easy to see that all details from this proof can be easily adapted for all integers $m>k$.  

\begin{proposition}\label{qd-H}
Let $L\in \CS^\star$ be a quotient-divisible with $r_0(G)=k\geq 1$. If $m>k$, there exists an uncountably family $\CW$ of  torsion-free (quotient-divisible) groups of rank $m$ such that for all $W\in \CW$ we have 
\begin{enumerate}[{\rm (i)}]
 \item $\End(W)\cong \BBZ$;
 \item $\Hom(L,W)=0$;
 \item all proper torsion-free quotient groups of $W$ are divisible; 
 \item $\Hom(W,L)$ is a torsion group;
 \item all rank $1$ subgroups of $W$ are free and there exists a free subgroup $F_W\leq W$ such that $W/F_W\cong (\BBQ/\BBZ)^{m-1}$. 
 \end{enumerate}
Moreover, if $W_1,W_2\in \CW$ and $W_1\neq W_2$ then $\Hom(W_1,W_2)=0$.
 \end{proposition}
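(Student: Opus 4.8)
The plan is to obtain $\CW$ directly from the construction of \cite[Lemma 4.1]{Go-W}, now carried out in rank $m$ in place of rank $k+1$, and to separate the verification of (i)--(v) into the properties that are \emph{internal} to the family (and so do not involve $L$) and the two properties (ii) and (iv) that couple $W$ to $L$. First I would fix an infinite set of primes and, for each such prime $p$, a sufficiently generic system of $p$-adic parameters; each admissible system determines a group $W$ sitting between a full free subgroup $F_W$ of rank $m$ and $\BBQ\otimes F_W\cong\BBQ^m$, with $W/F_W\cong(\BBQ/\BBZ)^{m-1}$. A short computation with the sequence $0\to F_W\to W\to W/F_W\to 0$ shows that this is equivalent to $W/pW\cong\BBZ/p$ for every prime $p$, i.e.\ $W$ is $p$-divisible of corank one at every $p$; this computation is uniform in $m$.

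The properties (i), (iii), (v), the pairwise vanishing $\Hom(W_1,W_2)=0$ for $W_1\neq W_2$, and the uncountability of $\CW$ are exactly the statements established in \cite[Lemma 4.1]{Go-W} and \cite[Proposition 3.3]{ABVW}; none of them mentions $L$, and in the original argument the value $m=k+1$ enters only as a rank-bookkeeping device. I would therefore rerun those arguments with $m$ in place of $k+1$: rigidity $\End(W)\cong\BBZ$ and the pairwise vanishing follow from the genericity of the $p$-adic directions (any endomorphism, respectively any homomorphism between two members, is forced to act as an integer on each $W/pW\cong\BBZ/p$, and genericity upgrades this to a single global integer); property (v) is built into $W/F_W\cong(\BBQ/\BBZ)^{m-1}$ together with the irrationality of the directions, which forbids any rank-one subgroup from being $p$-divisible; and property (iii) records that once one divides out a nonzero pure subgroup the remaining corank-one obstruction at each prime disappears. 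Uncountably many admissible parameter systems survive because each equality to be avoided excludes only a countable, indeed nowhere dense, set of systems.

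It remains to treat (ii) and (iv), where $L$ appears. Since $W$ is torsion-free, every $f\in\Hom(L,W)$ kills $T(L)$ and hence factors through $\overline{L}=L/T(L)$, a torsion-free quotient-divisible group with $r_0(\overline L)=k<m$; dually every $g\in\Hom(W,L)$ may be composed with the projection $\pi\colon L\to\overline L$. Because $r_0(W)=m>k$, any homomorphism between $W$ and a group of rank $\le k$ has nonzero kernel, so by (iii) its image is divisible, and I would then argue---exactly as in \cite[Proposition 3.3]{ABVW}---that the genericity of the parameters of $W$ relative to the countably many $p$-adic invariants of $\overline L$ forces these divisible images to vanish. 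This gives $\Hom(L,W)=0$ and $\pi g=0$, whence $g(W)\subseteq T(L)$ and $\Hom(W,L)=\Hom(W,T(L))$ is a torsion group by the same reasoning that makes $\Hom(L,T(L))$ torsion in Theorem \ref{char-ss}. The main obstacle is precisely this genericity step: one must check that imposing independence of $W$ from $L$ removes only countably many parameter systems, so that an uncountable subfamily $\CW$ still satisfies (ii) and (iv) simultaneously with all the internal properties. Once that is verified, the sole role played by the hypothesis $m=k+1$ in \cite[Proposition 3.3]{ABVW}, namely the rank inequality $r_0(W)>r_0(L)$, is already subsumed by the weaker hypothesis $m>k$, and the proof goes through verbatim.
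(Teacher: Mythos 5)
Your overall strategy is the same as the paper's: the paper's entire "proof" of this proposition consists of citing \cite[Proposition 3.3]{ABVW} (which is the case $m=k+1$, with the groups constructed in \cite[Lemma 4.1]{Go-W}) and remarking that the argument adapts to any $m>k$, the only role of $m=k+1$ being the rank inequality $r_0(W)>r_0(L)$ --- precisely the point you make at the end. Your handling of the construction, of the internal properties (i), (iii), (v), of the pairwise rigidity, and of property (iv) is consistent with that.

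There is, however, a genuine flaw in your treatment of property (ii). The step ``any homomorphism between $W$ and a group of rank $\le k$ has nonzero kernel, so by (iii) its image is divisible'' is valid only for homomorphisms \emph{out of} $W$, i.e., for (iv): there the image is a proper torsion-free quotient of $W$, to which (iii) applies. A homomorphism $\overline{L}\to W$ can perfectly well be injective (since $r_0(\overline{L})=k<m=r_0(W)$, rank imposes no kernel), and even when it has a kernel its image is a quotient of $\overline{L}$, not of $W$, so (iii) gives no information about it. Note also that a divisible subgroup of $W$ is automatically zero: by (v) the group $W$ contains no copy of $\BBQ$, hence $W$ is reduced, so ``genericity forces divisible images inside $W$ to vanish'' has no content. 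What (ii) actually requires is excluding nonzero quotient-divisible images of $\overline{L}$ of rank $\le k$ inside $W$. In the rank-one case this follows from (v) together with the hypothesis that $L$ has no free direct summands (the image would be $\cong\BBZ$ by (v), and a surjection $L\to\BBZ$ splits); in higher ranks it genuinely needs the family $\CW$ to be constructed \emph{relative to} $L$, as in \cite[Lemma 4.1]{Go-W} and \cite[Proposition 3.3]{ABVW}, rather than constructed generically and pruned afterwards. Since you ultimately defer to those sources, the gap is repairable, but the argument for (ii) as you wrote it would not stand on its own.
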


\begin{corollary}\cite[Proposition 3.4]{Br-2020}
If $G\in \CS^\star$ is quotient-divisible then for every positive integer $n$ there exists $H$ such that  $\mathrm{(I)}$ and $(\mathrm{II})_n$ are true for the pair $(G,H)$. 
\end{corollary}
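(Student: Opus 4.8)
The plan is to deduce the corollary directly by feeding the family produced by Proposition \ref{qd-H} into Lemma \ref{suff}. First I would dispose of the degenerate case: since $G\in\CS^\star$ is reduced and quotient-divisible, it cannot be divisible, so if $G\neq 0$ then a full free subgroup with divisible quotient forces $k:=r_0(G)\geq 1$ (the case $G=0$ being trivial). This is the only preliminary point that needs a word of justification, because Proposition \ref{qd-H} is stated only for $k\geq 1$.

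Next I would apply Proposition \ref{qd-H} with $L=G$ and a chosen integer $m>k$, for instance $m=k+1$. This yields an uncountable — and hence infinite — family $\CW$ of torsion-free (quotient-divisible) groups of rank $m$ satisfying properties (i)--(v) together with the pairwise vanishing $\Hom(W_1,W_2)=0$ for distinct members.

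The heart of the argument is then a routine translation between the conclusions of Proposition \ref{qd-H} and the hypotheses a)--d) of Lemma \ref{suff}. Property (ii) is exactly a), namely $\Hom(G,W)=0$ for all $W\in\CW$; property (iv) is exactly b), that $\Hom(W,G)$ is a torsion group; property (i) is exactly c), that $\End(W)\cong\BBZ$; and the final ``Moreover'' clause is exactly d). Thus $\CW$ is an infinite family of the precise type demanded by Lemma \ref{suff}, and invoking that lemma produces, for every positive integer $n$, a torsion-free group $H$ (assembled as a finite direct sum of suitably selected members of $\CW$) such that the pair $(G,H)$ satisfies $(\mathrm{I})$ and $(\mathrm{II})_n$, which is the assertion of the corollary.

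I do not expect a genuine obstacle at this stage: the substantive difficulties have already been absorbed into the two quoted results. Constructing an uncountable supply of rank-$m$ groups with scalar endomorphism ring and the required $\Hom$-orthogonality against $G$ and against each other is handled inside Proposition \ref{qd-H} (ultimately via the groups of \cite[Lemma 4.1]{Go-W}), while the delicate verification that the induced epimorphism $\alpha:H\to G(n)/nG(n)$ is \emph{rigid} — which is what makes $(\mathrm{II})_n$ work — is carried out in the proof of Lemma \ref{suff}. The remaining work is purely the bookkeeping of matching the two lists of conditions and noting that uncountable implies infinite.
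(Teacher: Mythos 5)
Your proposal is correct and matches the paper's intended derivation: the paper states this as a corollary (citing \cite[Proposition 3.4]{Br-2020}) precisely because it follows by applying Proposition \ref{qd-H} with $L=G$ and feeding the resulting uncountable family $\CW$ into Lemma \ref{suff}, which is exactly your matching of (ii), (iv), (i) and the ``Moreover'' clause with hypotheses a)--d). Your extra remark that a nonzero reduced quotient-divisible group must have $r_0(G)\geq 1$ is a harmless refinement the paper leaves implicit.
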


We will use the family $\CW$ from Proposition  \ref{qd-H} to prove a modified version for Proposition \ref{Stelzer}.  

\begin{proposition}\label{tf-W}
Let $K\in\CS^\star$ be torsion-free of rank $k$. Then for every $m>k$ there exists an uncountably family $\CV$ of  torsion-free quotient-divisible groups of rank $m$ such that for all $V\in \CV$ we have 
\begin{enumerate}[{\rm (i)}]
 \item $\End(V)\cong \BBZ$;
 \item $\Hom(V,K)=0$;
 \item all proper pure groups of $V$ are free (hence $\Hom(K,V)=0$);
 \item all rank $1$ torsion-free quotients of $V$ are divisible.
 \end{enumerate}
Moreover, if $V_1,V_2\in \CV$ and $V_1\neq V_2$ then $\Hom(V_1,V_2)=0$.     
\end{proposition}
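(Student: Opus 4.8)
The plan is to deduce this statement from Proposition \ref{qd-H} by transporting it through the duality $d\colon\BBQ\mathcal{TF}\rightleftarrows\BBQ\mathcal{QD}$ and its restriction to the self-duality of $\BBQ\mathcal{TFQD}$. Concretely, I would set $L=d(K)$, a quotient-divisible group of torsion-free rank $k$ with $d(L)$ quasi-isomorphic to $K$, and---after arranging that $L$ may be taken in $\CS^\star$---apply Proposition \ref{qd-H} with the given $m>k$ to obtain the uncountable family $\CW$ of torsion-free quotient-divisible groups of rank $m$. I would then define $\CV=\{d(W):W\in\CW\}$. Since $d$ preserves the torsion-free rank and restricts to a duality on $\BBQ\mathcal{TFQD}$, each $V=d(W)$ is again torsion-free quotient-divisible of rank $m$; and because distinct members of $\CW$ admit no nonzero homomorphisms between them (hence are pairwise non-quasi-isomorphic), the family $\CV$ is again uncountable with $\Hom(V_1,V_2)$ torsion for $V_1\neq V_2$.

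The verification of (i)--(iv) then amounts to reading each property of $W$ off its dual. For the Hom-conditions I would use the natural isomorphism $\BBQ\Hom(A,B)\cong\BBQ\Hom(d(B),d(A))$: from $\Hom(L,W)=0$ one gets $\BBQ\Hom(V,K)=0$, and from $\Hom(W_1,W_2)=0$ one gets $\BBQ\Hom(V_2,V_1)=0$; since $K$ and all the $V$ are torsion-free, any Hom-group that is torsion must vanish, giving (ii) and the final pairwise assertion. For the subgroup/quotient conditions I would invoke Remark \ref{rem-quasi-exact}, which under $d$ turns a pure subgroup into a quotient and a torsion-free quotient into a subgroup: a proper pure subgroup $U\le V$ dualizes to a proper torsion-free quotient $d(U)$ of $W$, which is divisible by Proposition \ref{qd-H}(iii), so $d(U)\cong\BBQ^s$ and hence $U$ is quasi-isomorphic to $\BBZ^s$---this is the raw form of (iii); dually, a rank-$1$ torsion-free quotient $Q$ of $V$ dualizes to a rank-$1$ subgroup $d(Q)$ of $W$, which is free by Proposition \ref{qd-H}(v), so $Q$ is quasi-isomorphic to $\BBQ$---the raw form of (iv). Finally (i) would follow once (iii) is in place: $\BBQ\End(V)\cong\BBQ$ forces $\End(V)$ to be a subring of $\BBQ$, and a free pure subgroup of rank $1$ shows $V$ is $p$-divisible for no prime $p$ (a pure subgroup of a $p$-divisible group is $p$-divisible, while $\BBZ$ is not), whence $\End(V)=\BBZ$.

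The main obstacle is that $d$ only preserves information up to quasi-isomorphism and up to torsion, whereas (i)--(iv) are \emph{exact} statements, so the real work lies in the upgrades. Here I would use three observations: (a) a finite-rank torsion-free group quasi-isomorphic to a free group is free, since it embeds with finite index in a free group and subgroups of free abelian groups are free---this promotes ``$U$ quasi-isomorphic to $\BBZ^s$'' to ``$U$ free'', and then $\Hom(K,V)=0$ follows because $K$ has no free direct summand (any $K\to V$ has image inside a free pure subgroup, and a surjection onto a free group splits); (b) $\BBQ$ is injective, so a rank-$1$ torsion-free group quasi-isomorphic to $\BBQ$ is in fact divisible---this promotes ``$Q$ quasi-isomorphic to $\BBQ$'' to ``$Q\cong\BBQ$''; and (c) torsion Hom-groups between torsion-free groups vanish.

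A secondary point requiring care at the very start is the applicability of Proposition \ref{qd-H}: the dual of a reduced torsion-free group need not be reduced---for instance the dual of the rank-$1$ group $\{a/b : b\ \text{squarefree}\}$ fails to be reduced---so one must either check that a possible divisible summand of $L=d(K)$ is harmless (it is annihilated by every homomorphism into the reduced groups $W$, and contributes nothing to the dual conditions) or isolate it before applying Proposition \ref{qd-H}. I expect this to be a routine but necessary adjustment rather than a genuine difficulty.
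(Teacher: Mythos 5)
Your proposal follows the paper's proof essentially step for step: dualize $K$ to $L=d(K)$, apply Proposition \ref{qd-H} to $L$, set $\CV=\{d(W)\mid W\in\CW\}$, obtain the Hom conditions from $\BBQ\Hom(A,B)\cong \BBQ\Hom(d(B),d(A))$ plus torsion-freeness, and get (iii) and (iv) by dualizing the pure exact sequence $0\to U\to V\to V/U\to 0$ via Remark \ref{rem-quasi-exact} and invoking Proposition \ref{qd-H}(iii) and (v), with the same quasi-isomorphism-to-isomorphism upgrades. The one methodological difference is minor and harmless: for (i) you deduce $\End(V)\cong\BBZ$ from (iii) (a rank-one pure subgroup is $\cong\BBZ$, so $V$ is $p$-divisible for no prime $p$), whereas the paper rules out $p$-divisibility of $d(W)$ by applying Lemma \ref{prop-duality}(a) to $W/F_W\cong(\BBQ/\BBZ)^{m-1}$; both arguments are correct.

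One correction to your secondary point, though: your claimed counterexample is false, and the issue it guards against cannot occur. For every torsion-free $K\in\CS^\star$ the dual $d(K)$ lies in $\CS^\star$ automatically. Indeed, since $d(K)$ is quotient-divisible its torsion part is reduced, so a nonzero divisible part would be $\cong\BBQ^s$ and a direct summand; dualizing, $\BBZ$ would be a quasi-summand of $K$, hence $\Hom(K,\BBZ)\neq 0$, and any nonzero map $K\to\BBZ$ has image $\cong\BBZ$ and therefore splits, giving $K$ a free direct summand, a contradiction. (Symmetrically, a free direct summand of $d(K)$ would put a copy of $\BBQ$ inside $K$, contradicting reducedness.) This is precisely what the paper's one-line observation ``$d(\BBZ)=\BBQ$ and $d(\BBQ)=\BBZ$'' encodes. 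In particular, for $K_0=\{a/b : b \text{ squarefree}\}$ one has $\Hom(K_0,\BBZ)=0$, so $d(K_0)$ \emph{is} reduced: it is (up to quasi-isomorphism) a reduced mixed group of rank one with torsion subgroup $\oplus_p \BBZ/p\BBZ$, of the kind appearing in the paper's Remark following Proposition \ref{Stelzer} as a subgroup of the reduced group $\prod_p\BBZ/p\BBZ$. Your fallback plan (a divisible summand of $L$ is annihilated by every map into the reduced torsion-free groups $W$, so one could apply Proposition \ref{qd-H} to the reduced part) would indeed work, so this slip does not damage the argument; it is simply an unnecessary detour, and the false example should be deleted.
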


\begin{proof} Let $d:\BBQ\mathcal{TF}\rightleftarrows \BBQ\mathcal{QD}:d$ be the duality presented in Section \ref{self-small-subsect}.
Since $d(\BBZ)=\BBQ$ and $d(\BBQ)=\BBZ$, it is easy to observe that the quotient-divisible group $d(K)$ belongs to $\CS^\star$. Then there exists a family $\CW$ as in Proposition \ref{qd-H} which corresponds to $d(K)$. Let $\CV=\{d(W)\mid W\in \CW\}$. Since the restriction of $d$ to the class of quotient-divisible groups coincides with Arnold's duality described in \cite{Ar72} and all groups $W\in\CW$ are torsion-free and quotient-divisible, the groups $d(W)$ are torsion-free and quotient-divisible.

(i) Since $d$ is a duality, for every $W\in \CW$ we have $\BBQ\End(d(W))\cong \BBQ\End(W)\cong \BBQ$. Then $\End(d(W))$ is isomorphic to a unital subring of $\BBQ$. It follows that  $\End(d(W))\ncong \BBZ$ if and only if there exists a prime $p$ such that $\End(d(W))$ is $p$-divisible. 

Suppose that there exists such a prime $p$. It follows that $d(W)$ is $p$-divisible. Since there exists a full free subgroup $F_W\leq W$ such that $W/F_W\cong (\BBQ/\BBZ)^{m-1}$, we can apply the statement 1) from Lemma \ref{prop-duality} to obtain a contradiction. Therefore, for every $W\in \CW$ we have $\End(d(W))\cong \BBZ$.

(ii) We have $\BBQ\Hom(V,K)=\BBQ\Hom(d(W),K)\cong \BBQ\Hom(d(K),W)=0$, hence $\Hom(V,K)=0$ since it is torsion-free.

(iii) and (iv) Let $U$ be a proper pure subgroup of $V=d(W)$. We apply $d$ to the exact sequence $$0\to U\overset{\iota}\to V\overset{\pi}\to V/U\to 0,$$ where $\iota$ is the inclusion map, and $\pi$ is the canonical surjection. By Remark \ref{rem-quasi-exact} it follows that we can assume w.l.o.g. that $\Ima(d(\pi))$ is a finite index subgroup of $\Ker(d(\iota))$ and that $\Ima(d(\iota))$ is a finite index subgroup of $d(U)$. But all proper torsion-free quotient subgroups of $d(V)$ are divisible, \cite[Theorem 2.1]{Go-W}. Since $d$ preserves the rank, it follows that $\Ima(d(\iota))$ is divisible,  and this implies that $d(U)$ is divisible. Then $U$ is free. 

Similarly, if $U$ is of rank $m-1$ then $d(V/U)$ is isomorphic to a (finite index) subgroup of the rank $1$ subgroup $\Ker(d(\iota))$. It follows that $d(V/U)\cong \BBZ$ since all rank $1$ subgroups of $d(V)$ are isomorphic to $\BBZ$. Then $V/U\cong \BBQ$. 
\end{proof}

\subsection{When the divisible part of $G/T(G)$ has rank $1$}

\begin{proposition}\label{G/T(G)-rank1}
Suppose that $G\in\CS^\star$ is a group such that the divisible part of $G/T(G)$ has rank $1$. Then for every positive integer $n$ there exists $H$ such that the pair $(G,H)$ verifies the conditions $\mathrm{(I)}$ and $(\mathrm{II})_n$. 

Consequently, if $G$ has the cancellation property then $\End_{W}(G)$ has the unit lifting property.
\end{proposition}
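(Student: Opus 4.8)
The plan is to produce, for the group $G$ in the statement, an infinite family $\CW$ satisfying the hypotheses a)--d) of Lemma \ref{suff}; once this is available, Lemma \ref{suff} yields for every positive integer $n$ a torsion-free group $H$ for which the pair $(G,H)$ verifies $\mathrm{(I)}$ and $(\mathrm{II})_n$, and the final ``Consequently'' clause follows immediately from Proposition \ref{fuchs-mixed}. First I would record the structural consequence of the hypothesis: since $\overline{G}=G/T(G)$ is torsion-free of finite rank, its divisible part $D$ is a direct summand, so we may write $\overline{G}=D\oplus R$ with $D\cong\BBQ$ (by the rank-$1$ hypothesis) and $R$ reduced torsion-free. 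Moreover $\overline{G}$ can have no free direct summand, for a retraction $\overline{G}\to\BBZ$ would compose with the canonical $G\to\overline{G}$ to give a split surjection $G\to\BBZ$, contradicting $G\in\CS^\star$. Hence $R$, being reduced torsion-free of finite rank with no free summand, lies in $\CS^\star$ (when $r_0(G)=1$ we have $R=0$, and this degenerate case is handled identically, feeding Proposition \ref{tf-W} any torsion-free member of $\CS^\star$, the conditions involving $R$ being then vacuous).

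Next I would build the family. Applying Proposition \ref{tf-W} to $K=R$ with any $m>r_0(R)$ gives an uncountable, pairwise orthogonal family $\CV$ of torsion-free quotient-divisible groups $V$ of rank $m$ with $\End(V)\cong\BBZ$, all proper pure subgroups free, all rank-$1$ torsion-free quotients divisible, and $\Hom(V,R)=0=\Hom(R,V)$. Two observations make $\CV$ eligible for Lemma \ref{suff}: since $\End(V)\cong\BBZ$, the group $V$ is reduced and is not $p$-divisible for any prime $p$ (otherwise division by $p$ would embed $\BBZ[1/p]$ in $\End(V)$), and consequently $V$ surjects onto every finite cyclic group (an element of $V$ outside $pV$ has order $p^{s}$ in $V/p^{s}V$ and generates a cyclic direct summand), which is the epimorphism property invoked in the proof of Lemma \ref{suff}. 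I would then verify a)--d) for $\CW:=\CV$: conditions c) and d) are immediate, while a) reads $\Hom(G,V)=\Hom(\overline{G},V)=\Hom(\BBQ,V)\oplus\Hom(R,V)=0$, using that $V$ is reduced and that $\Hom(R,V)=0$.

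The one real obstacle is b), namely showing that $\Hom(V,G)$ is torsion \emph{despite} the divisible summand $D\cong\BBQ$ of $\overline{G}$, which makes $\Hom(V,\overline{G})$ visibly non-torsion. Here is the argument I would give. Let $f\colon V\to G$ and let $\pi\colon G\to\overline{G}$ be the projection; since $\Hom(V,R)=0$, the composite $\pi f$ maps into $D\cong\BBQ$. If $\pi f\neq 0$, then $\Ima(\pi f)$ is a nonzero subgroup of $\BBQ$, hence a rank-$1$ torsion-free quotient of $V$, hence divisible, and therefore equal to $\BBQ$; thus $\pi f$ is onto and $\ker(\pi f)$ is a proper pure subgroup of $V$, so it is free. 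Restricting $f$ to this free kernel lands in $T(G)$ with finitely generated, hence finite, image, and $f(V)\cap T(G)=f(\ker(\pi f))$ is finite; so $f(V)$ is an extension of $\BBQ$ by a finite group. As $\Ext(\BBQ,F)=0$ for every finite $F$, this extension splits, whence $f(V)$, and therefore $G$, would contain a copy of the divisible group $\BBQ$, contradicting that $G$ is reduced. Hence $\pi f=0$, i.e.\ $f(V)\subseteq T(G)$. Finally $\Hom(V,T(G))$ is torsion: any $g\colon V\to T(G)$ carries a full free subgroup $F_V\le V$ into a finite subgroup $\bigoplus_{p\in P_0}T_p(G)$, and for $q\notin P_0$ the component $V\to T_q(G)$ kills $F_V$, hence factors through the divisible group $V/F_V$ into a reduced finite group and vanishes; thus $g(V)$ is finite. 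This gives $\Hom(V,G)=\Hom(V,T(G))$ torsion, completing b).

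With a)--d) established, Lemma \ref{suff} produces the desired $H$ for every $n$, and Proposition \ref{fuchs-mixed} then shows that if $G$ has the cancellation property then $\End_{\rmW}(G)$ has the unit lifting property. The delicate point, isolated in the previous paragraph, is precisely the control of homomorphisms into the rank-$1$ divisible direction; it is exactly here that the hypothesis $r_0(D)=1$ is used, since it forces $\Ima(\pi f)$ to have rank at most $1$ and thereby lets the rigidity of the members of $\CV$ (all rank-$1$ torsion-free quotients divisible, all proper pure subgroups free) take effect.
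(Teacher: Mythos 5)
Your proof is correct and follows essentially the same route as the paper's: decompose $G/T(G)$ as $\BBQ\oplus R$ with $R$ reduced, apply Proposition \ref{tf-W} to $R$, and verify the hypotheses of Lemma \ref{suff}, where the key step --- showing $\Hom(V,G)$ is torsion because a nonzero composite $V\to G\to\BBQ$ would produce a $\BBQ$-summand inside the reduced group $G$ --- is exactly the paper's argument. The differences are cosmetic: you handle the degenerate case $R=0$ and the fact that $R$ has no free summands explicitly, work with $\ker(\pi f)$ rather than the pure envelope of $\ker(f)$, and finish the claim that $\Hom(V,T(G))$ is torsion by a componentwise argument (choosing $F_V$ with $V/F_V$ divisible) instead of citing that quotients of quotient-divisible groups have bounded reduced parts.
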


\begin{proof}
We have $G/T(G)=\overline{Q}\oplus \overline{M}$ such that $\overline{Q}\cong \BBQ$ and $ \overline{M}$ is a reduced torsion-free group. Let $T(G)\leq Q, M\leq G$ such that $Q/T(G)=\overline{Q}$ and $M/T(G)=\overline{M}$. Then $Q$ and $M$ are pure subgroups of $G$, \cite[Ex. S.3.26]{Ab-Gr-ex}. Moreover, $G/Q\cong \overline{M}\in\CS^\star$ is torsion-free, hence we can apply Proposition \ref{tf-W}.

Let $\mathcal{V}$ be an infinite class of groups constructed as in Proposition \ref{tf-W} such that for all $V\in \CV$ we have $r(V)=r_0(G)+1$ and $\Hom(V,\overline{M})=0$. Then $\Hom(G,V)=0$ for all $V\in \CV$.  

Let $f:V\to G$ be a morphism, where $V\in \CV$. Suppose that $f(V)$ is not a torsion-group. Let $\pi:G\to G/T(G)$ and $\pi_{\overline{M}}:G/T(G)\to \overline{M}$ be the canonical projections. It follows that $\pi_{\overline{M}}\pi f=0,$ hence $\pi f(V)\leq \overline{Q}$ (note that $\overline{Q}$ is the unique direct complement of $\overline{M}$). Then the image $f(V)$ has the torsion-free rank $1$. 
Let $K$ be the kernel of $f$. Then the pure envelope $K_\star$ of $K$ is a proper pure subgroup of $V$, hence it is free, and it follows that $K_\star/K$ is finite. Since $V/K_\star\cong \BBQ$, it follows that $f(V)\cong V/K\cong B\oplus Q$, hence $G$ is not reduced, a contradiction. It follows that $f(V)$ is a reduced torsion group. But $V$ is quotient-divisible, and this implies that the reduced parts of all its quotients are bounded. Then $f(V)$ is bounded. Then there exists a positive integer $m$ such that $mf=0$. It follows that for all $V\in \CV$ the group $\Hom(V,G)$ is a torsion group. By Lemma \ref{suff} we conclude that for every positive integer $n$ there exists $H$ such that the pair $(G,H)$ verifies the conditions $\mathrm{(I)}$ and $(\mathrm{II})_n$.    
%
%
%
\end{proof}

\section{Self-small groups which verify Stelzer's theorem}

The main aim of this section is to identify classes $\CC\subseteq \CS^\star$ such that they satisfy the mixed version of Stelzer's Theorem, i.e. if $G\in \CC$ has the cancellation property then $\End_{\mathrm{W}}(G)$ has the unit lifting property. 

\subsection{A reduction lemma} As a first step we will prove a result which reduces our study to groups which have no mixed direct summands from the class $\CG$. We recall that a group $G\in \CS$ is in \textit{the class $\CG$} if $G/T(G)$ is divisible, \cite{AGW}. It was proved in \cite{Fi-W} that all groups from $\CG$ have the cancellation property. Moreover, for every $G\in\CG$ the Walk-endomorphism ring $\End_{\mathrm{W}}(G)$ is torsion-free and divisible, hence it has the unit lifting property. 

\begin{lemma}\label{fara-calG}
Let $G=L\oplus K\in \CS$ such that $L\in \CG$. Then $\End_{\mathrm{W}}(G)$ has the unit lifting property if and only if $\End_{\mathrm{W}}(K)$ has the unit lifting property. 
\end{lemma}

\begin{proof}
We write $\End(G)$ as a matrix ring $$\End(G)=\left(\begin{array}{cc}
\End(L) & \Hom(K,L) \\ \Hom(L,K) & \End(K)                                                    \end{array}\right).
$$
Since $G$ is self-small, it follows that if $X,Y\in\{L,K\}$ then $T(\Hom(X,Y))=\Hom(X,T(Y))$. Moreover, from the exact sequence 
$$0\to T(Y)\to Y\to Y/T(Y)\to 0,$$ we obtain an injective map 
$$\Hom(X,Y)/\Hom(X,T(Y))\hookrightarrow \Hom(X,Y/T(Y))\hookrightarrow \Hom(X/T(X),Y/T(Y)).$$ 
The image of this map is pure, \cite[Lemma 2.6]{Br-Warf}. 

In the matrix representation 
$$\End_{\mathrm{W}}(G)=\left(\begin{array}{cc}
\End_{\mathrm{W}}(L) & \Hom(K,L)/\Hom(K,T(L)) \\ \Hom(L,K)/\Hom(L,T(K)) & \End_{\mathrm{W}}(K)                                                    \end{array}\right),
$$
the additive groups  $$\End_{\mathrm{W}}(L),\  \Hom(K,L)/\Hom(K,T(L)), \textrm{ and }\Hom(L,K)/\Hom(L,T(K))$$ are torsion-free and divisible. 
Hence, for every positive integer $n$ the factor ring $\End_{\mathrm{W}}(G)/n\End_{\mathrm{W}}(G)$ can be identified with $\End_{\mathrm{W}}(K)/n\End_{\mathrm{W}}(K)$. The conclusion is now obvious.
\end{proof}

Consequently, in order to find classes of groups which satisfy the mixed version of Stelzer's theorem, it is enough to identify self-small mixed groups which are direct sums of a group from $\CG$ and a group which verify the hypothesis of Proposition \ref{fuchs-mixed}. 

\subsection{Self-small groups as pushouts}
We will describe (up to a finite summand) the groups in $\CS^\star$ in a manner which is similar, but not identical, to that presented in \cite[Proposition 3.2]{Br-S}, as pushouts of quotient-divisible groups and torsion-free groups.

\begin{lemma}\label{pushout}
Let $G\in \CS^\star$ be of torsion-free rank $k$. There exists a direct decomposition $G=V\oplus G'$ such that $V$is finite and $G'$ can be embedded in a pushout diagram 
$$\xymatrix{0\ar[r] & F \ar[r]\ar[d] &K\ar[r]\ar[d] & K/F\ar[r]\ar@{=}[d] & 0\\
0\ar[r] & L \ar[r] &G'\ar[r] & K/F\ar[r] & 0
}$$ such that 
\begin{enumerate}[{\rm (1)}]
\item $F$ is a full free subgroup $F\leq G'$, and $G'/F$ is $p$-divisible for all primes $p$ with $T_p(G')\neq 0$;
\item $L$ is quotient-divisible such that $T(G')\leq L$ and $L/F$ is divisible; 
\item $K$ is torsion-free and $K/F$ is reduced;
\item for every prime $p$ we have $(G'/L)_p= 0$ or $T_p(G')= 0$.
\end{enumerate}
\end{lemma}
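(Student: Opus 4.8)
The plan is to reduce conditions (1)--(4) to a single divisibility requirement on a well-chosen full free subgroup, and then to build the pushout directly by splitting the torsion quotient $G'/F$ into its divisible and reduced parts. Concretely, suppose $F$ is a full free subgroup of a group $G'$ of torsion-free rank $k$; write $G'/F=\CALD\oplus\CR$, where $\CALD$ is the maximal divisible subgroup (a direct summand, being injective) and $\CR$ a reduced complement. Let $L$ and $K$ be the preimages in $G'$ of $\CALD$ and $\CR$ under $G'\to G'/F$. Then one checks immediately that $F\leq L\cap K$, that $L\cap K=F$ (preimage of $\CALD\cap\CR=0$) and $L+K=G'$, so that $G'$ is the pushout of $L\hookleftarrow F\hookrightarrow K$; moreover $L/F=\CALD$ is divisible, $K/F=\CR$ is reduced, and $G'/L\cong(G'/F)/\CALD\cong\CR$. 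Thus the entire statement, including torsion-freeness of $K$, the inclusion $T(G')\leq L$ (which makes $L$ quotient-divisible), and condition (4), will follow at once from the single requirement
$$(\ast)\qquad (G'/F)_p \text{ is divisible for every prime } p \text{ with } T_p(G')\neq 0,$$
since $(\ast)$ forces $\CR_p=0$ at every prime carrying torsion: then the image of $T(G')$ lands in $\CALD$ (so $T(G')\leq L$ and $K\cap T(G')\subseteq L\cap K\cap T(G')=F\cap T(G')=0$), and $(G'/L)_p=\CR_p=0$. Note that $(\ast)$ is literally conditions (1) and (4).

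The core of the argument is therefore to produce a finite direct summand $V$ of $G$ and a full free $F$ in the complement $G'$ realizing $(\ast)$. I would start from an arbitrary full free subgroup $F_0\leq G$. By Theorem~\ref{char-ss}(3) the set $P_0=\{p:\,T_p(G)\neq 0 \text{ and } G/F_0 \text{ is not } p\text{-divisible}\}$ is finite, so by self-smallness $V=\oplus_{p\in P_0}T_p(G)$ is a finite direct summand; write $G=V\oplus G'$ with projection $\pi$. Take $F=\pi(F_0)$: since $V$ is torsion and $F_0$ free, $F_0\cap V=0$, so $F\cong F_0$ is free and full in $G'$. The key computation is that $\pi^{-1}(F)=F_0+V$, whence $G'/F\cong G/(F_0+V)\cong (G/F_0)/V'$ for a finite subgroup $V'\cong V$. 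Because quotients of $p$-divisible groups are $p$-divisible, $G'/F$ is $p$-divisible at every prime where $G/F_0$ is. Now if $T_p(G')\neq 0$ then $p\notin P_0$ (the $P_0$-torsion was removed, so $T_p(G')=T_p(G)$ and $p\in P_0$ would give $T_p(G')=0$); hence $G/F_0$, and therefore $G'/F$, is $p$-divisible. This is exactly $(\ast)$.

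The main obstacle is precisely this divisibility bookkeeping: a priori the set of primes at which $G/F$ fails $p$-divisibility depends on the choice of $F$, and one must ensure that projecting $F_0$ into $G'$ and passing to the quotient does not spoil $p$-divisibility at the good primes. The clean way around it is the observation that $p$-divisibility is inherited by arbitrary quotients, which lets a single projection accomplish everything and avoids any iteration over primes. Once $(\ast)$ is in hand I would finish routinely: note that $G'$, a direct summand of a reduced self-small group of finite torsion-free rank without free direct summands, is again of this type; record $L/F=\CALD$ and $K/F=\CR$ to conclude that $L$ is quotient-divisible with $T(G')\leq L$ and that $K$ is torsion-free; and read off the displayed pushout diagram together with the identification $G'/L\cong K/F$.
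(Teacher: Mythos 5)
Your proof is correct and follows essentially the same route as the paper's: split off the finite summand $V$ carried by the finitely many primes where $G/F_0$ fails to be $p$-divisible (Theorem \ref{char-ss}), transport $F_0$ to a full free subgroup $F$ of $G'$, split the torsion group $G'/F$ into divisible plus reduced parts, and take $L$ and $K$ to be the preimages. The only differences are at the level of implementation: the paper takes $F=F_0\cap G'$, so that $G'/F$ is a finite-index subgroup of $G/F_0$ (the divisibility transfer then uses that finite-index subgroups of $p$-divisible torsion groups are $p$-divisible), whereas you take $F=\pi(F_0)$, so that $G'/F$ is a quotient of $G/F_0$ and the transfer is immediate; and the paper cites \cite[Lemma 3.1]{Br-S} to obtain the pushout diagram, whereas you verify it directly from $L\cap K=F$ and $L+K=G'$.
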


\begin{proof}
Let $F_0$ be a full free subgroup of $G$. It follows that the set $$U=\{p\in \mathbb{P}\mid (G/F_0)_p\neq p(G/F_0)_p \textrm{ and } T_p(G)\neq 0\}$$ is finite. We consider the subgroup $V=\oplus_{p\in U}T_p(G)$ and we fix a direct decomposition $G=V\oplus G'$. Then $F=F_0\cap G'$ is a full free subgroup of $G'$, and $G'/F$ is isomorphic to a finite index subgroup of $G/F_0$. Suppose that there exists a prime $p$ such that $G'/F$ is not $p$-divisible and $T_p(G')\neq 0$. Then $G/F_0$ is not $p$-divisible and $T_p(G)\neq 0$, hence $p\in U$. This is not possible since $T_p(G')=0$ for all $p\in U$. It follows that for every prime $p$ such that $T_p(G')\neq 0$ the group $G'/F$ is $p$-divisible. 

We write $G'/F=\widehat{L}\oplus \widehat{K}$ such that $\widehat{L}$ is divisible and $\widehat{K}$ is reduced.  We denote by $L$ and $K$ the subgroups of $G'$ such that $F\leq L\cap K$, $L/F=\widehat{L}$, and $K/F=\widehat{K}$.     
We observe that $K$ is torsion-free and $T(G')\leq L$. Moreover, for every prime $p$ we have $(G'/L)_p= 0$ or $T_p(G')= 0$. 

In order to complete the proof, it is enough to apply \cite[Lemma 3.1]{Br-S} to conclude that all these data can be included in a pushout diagram.  
\end{proof}

\begin{proposition}\label{cazuri}
Let $G\in \CS^\star$ be a group which can be embedded in a pushout diagram as in Lemma {\rm \ref{pushout}}. Moreover, we fix a decomposition $L=F_1\oplus L_1$ such that  $F_1$ is free and $L_1$ has no free direct summands. 
\begin{enumerate}[{\rm a)}]
\item If $K$ is free then $G$ is quotient-divisible.
\item If $L$ is free then $G$ is torsion-free. 
\item If $L_1\in \CG$ then $G=L_1\oplus K_1$, where $K_1$ is torsion-free.
\item If $r(F_1)\leq 1$ then for every positive integer $n$ there exists $H$ such that the pair $(G,H)$ verifies the conditions $\mathrm{(I)}$ and $(\mathrm{II})_n$.  
\item If $r_0(L_1)=1$ then one of the following properties holds true:
			\begin{enumerate}[{\rm (i)}] \item $G/T(G)$ is reduced, or 
			\item $G=L_1\oplus K_1$, where $L_1\in \CG$ and $K_1$ is torsion-free.
			\end{enumerate}
\item If $r_0(L_1)=2$ then one of the following properties holds true:
			\begin{enumerate}[{\rm (i)}] 
			\item $G/T(G)$ is reduced;
			\item for every positive integer $n$ there exists $H$ such that the pair $(G,H)$ verifies the conditions $\mathrm{(I)}$ and $(\mathrm{II})_n$;
			\item $G=L_1\oplus K_1$, where $L_1\in \CG$ and $K_1$ is torsion-free.
			\end{enumerate}
\end{enumerate}    
\end{proposition}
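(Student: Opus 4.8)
I would organise the proof around a single structural observation and then peel off the cases. Parts (a) and (b) are immediate from Lemma~\ref{pushout}: since $T(G)\le L$ by (2), if $L$ is free then $T(G)=0$ and $G$ is torsion-free, giving (b); and since $G=L+K$ with $L\cap K=F$, if $K$ is free it is a full free subgroup with $G/K\cong L/F$ divisible by (2), so the reducedness of $T(G)$ forces $G$ to be quotient-divisible, giving (a). The observation that drives (e) and (f) is that \emph{the divisible part $D$ of $\overline G=G/T(G)$ coincides with the divisible part of $\overline{L_1}=L_1/T(L_1)$}. To see this, work inside $\overline G$ with $\overline L=L/T(G)$ and $\overline K\cong K$; one checks $\overline L\cap\overline K=\overline F$, so $\overline G/\overline L\cong \overline K/\overline F\cong K/F$, which is reduced by (3). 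Hence the image of the divisible subgroup $D$ in $\overline G/\overline L$ vanishes, so $D\le\overline L=\overline{F_1}\oplus\overline{L_1}$ with $\overline{F_1}$ free; thus $D\le D(\overline{L_1})$, and as the reverse inclusion is clear, $D=D(\overline{L_1})$. In particular $\overline G$ is reduced exactly when $\overline{L_1}$ is, and the ranks of their divisible parts agree.

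For (c), where $\overline{L_1}$ is divisible, I would build a retraction $G\to L_1$ from the pushout. By the universal property, a map $G\to L_1$ is a pair $L\to L_1$, $K\to L_1$ agreeing on $F$; choosing the projection $\pi_{L_1}\colon L\to L_1$ on the first leg reduces the task to extending $\gamma_2:=\pi_{L_1}|_F\colon F\to L_1$ along $F\hookrightarrow K$, an extension whose obstruction lies in $\Ext(K/F,L_1)$. Divisibility of $\overline{L_1}$ gives $\Ext(K/F,\overline{L_1})=0$, so from $0\to T(L_1)\to L_1\to\overline{L_1}\to0$ the group $\Ext(K/F,L_1)$ is a quotient of $\Ext(K/F,T(L_1))$. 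Now $T(L_1)=T(G)$, and $\Ext(K/F,T(G))$ decomposes as $\prod_p\Ext\big((K/F)_p,T_p(G)\big)$; by condition (4) of Lemma~\ref{pushout}, for each $p$ either $(K/F)_p=0$ or $T_p(G)=0$, so every factor—and hence $\Ext(K/F,L_1)$—vanishes. The retraction therefore exists, and since $T(G)\le L_1$ its kernel $K_1$ is torsion-free, yielding $G=L_1\oplus K_1$.

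Parts (e) and (f) then reduce to casework on the rank of $D(\overline{L_1})$, which by the key observation equals the rank of the divisible part of $\overline G$. If $\overline{L_1}$ is reduced, so is $\overline G$, and we are in case (i), handled ultimately by Proposition~\ref{G-bar-reduced}. If $\overline{L_1}$ is divisible, then $L_1\in\CG$ and part (c) yields the decomposition $G=L_1\oplus K_1$. This already settles (e), where $r_0(L_1)=1$ forces the rank-one group $\overline{L_1}$ to be either reduced or $\cong\BBQ$. For (f), with $r_0(L_1)=2$, the only remaining possibility is that $D(\overline{L_1})$ has rank $1$; then the divisible part of $\overline G$ has rank $1$ and Proposition~\ref{G/T(G)-rank1} supplies the groups $H$, giving case (ii).

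Part (d) is where I expect the real work to lie. When $r(F_1)\le1$ the plan is to produce, via the duality $d$ and Propositions~\ref{qd-H} and~\ref{tf-W}, an uncountable family $\CW$ of rank-$(k+1)$ torsion-free groups with $\End(W)\cong\BBZ$ and $\Hom(W_i,W_j)=0$ for $W_i\ne W_j$, simultaneously orthogonal to the quotient-divisible part $\overline{L_1}$ and to the torsion-free part $K$ of $G$, and then to feed $\CW$ into Lemma~\ref{suff}. The hypothesis $r(F_1)\le1$ enters precisely in controlling homomorphisms into and out of the free summand $F_1$: the rigidity of the $W$ (proper torsion-free quotients divisible, $\End W\cong\BBZ$) combined with the torsion quotient $\overline G/\overline L\cong K/F$ should force $\Hom(G,W)=0$ and $\Hom(W,G)$ torsion once $F_1$ has rank at most one. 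Verifying these two vanishing statements—and locating exactly why a free summand of rank $\ge2$ would defeat them—is the main obstacle; granting conditions (a)–(d) of Lemma~\ref{suff}, that lemma then produces the required $H$ for every $n$.
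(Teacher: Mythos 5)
Parts a), b), c), e) and f) of your proposal are correct and essentially coincide with the paper's own proof. Your explicit key observation that the divisible part of $\overline{G}=G/T(G)$ equals that of $\overline{L_1}$ (since $\overline{G}/\overline{L}\cong G/L\cong K/F$ is reduced and $\overline{F_1}$ is free) is precisely what the paper uses implicitly in e) and f). In c) you argue via the pushout universal property, extending $\pi_{L_1}|_F$ along $F\hookrightarrow K$ with obstruction in $\Ext(K/F,L_1)$, whereas the paper splits $0\to L_1\to G\to G/L_1\to 0$ directly by proving $\Ext(G/L_1,L_1)=0$; since $K/F\cong G/L$, both arguments reduce to the same vanishing $\Ext(G/L,L_1)=0$, obtained from condition (4) together with $T(G)=T(L_1)$ and the injectivity of the divisible group $L_1/T(G)$, so this is only a cosmetic difference.

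The genuine gap is part d). You outline the correct strategy (produce a rigid family $\CW$ and feed it into Lemma \ref{suff}), but you explicitly leave unverified the two conditions $\Hom(G,W)=0$ and ``$\Hom(W,G)$ is a torsion group,'' calling them ``the main obstacle''; those verifications are the entire content of d), and they are exactly where the hypotheses $r(F_1)\le 1$ and $G\in\CS^\star$ are used. The paper's argument is short. Take $\CW$ as in Proposition \ref{qd-H} associated to $L_1$ alone, with all $W\in\CW$ of rank $r_0(G)+1$; no simultaneous orthogonality to $K$ is needed, and neither the duality nor Proposition \ref{tf-W} enters d) (they are used only inside Proposition \ref{G/T(G)-rank1}, i.e.\ in case f)(ii)). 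Given $f:W\to G$: since $r(W)=r_0(G)+1$, the kernel of $f$ is nonzero, so, $W$ being quotient-divisible, $\Ima(f)$ is a direct sum of a divisible group and a finite group; as $G$ is reduced, $\Ima(f)$ is finite, hence $\Hom(W,G)$ is torsion. Given $f:G\to W$: Proposition \ref{qd-H}(ii) gives $f|_{L_1}=0$, so $f$ factors through $G/L_1$, whose torsion-free rank is $r(F_1)\le 1$; if $r(F_1)=0$ then $G/L_1\cong K/F$ is torsion and $f=0$ because $W$ is torsion-free, while if $r(F_1)=1$ and $f\neq 0$ then $\Ima(f)$ is a rank-one subgroup of $W$, hence isomorphic to $\BBZ$ by Proposition \ref{qd-H}(v), and the resulting split epimorphism $G\to\BBZ$ yields a free direct summand of $G$, contradicting $G\in\CS^\star$. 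Without these two computations, your d) is a plan rather than a proof.
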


\begin{proof}
a) This follows from the isomorphism $G/K\cong L/F$.

b) We have $T(G)\leq L$. Since $L$ is free, it follows that $T(G)=0$.

c) For every prime $p$ we have $(G/L)_p= 0$ or $T_p(G)= 0$. This implies that $\Ext(G/L,T(G))=0$. 

Moreover, $T(G)\leq L_1$ and $L_1/T(G)$ is divisible. Applying the covariant functor $\Ext(G/L,-)$ to the exact sequence $0\to T(G)\to L_1\to L_1/T(G)\to 0$, we obtain $\Ext(G/L,L_1)=0$. 
Therefore, in the exact sequence $$\Ext(G/L,L_1)\to \Ext(G/L_1,L_1)\to \Ext(L/L_1,L_1)$$ the first group is $0$. But $\Ext(L/L_1,L_1)=0$ because the group $L/L_1$ is free, hence $\Ext(G/L_1,L_1)=0$. This implies that $L_1$ is a direct summand of $G$. 
Then $G=L_1\oplus K_1$, where $K_1$ is reduced and torsion-free. 

d) We will prove that $G$ verifies the hypothesis of Proposition \ref{fuchs-mixed}.

We have to study the cases when $L$ has no free direct summands or when $L=\langle x\rangle \oplus L_1$, where $L_1$ has no free direct summands.
We will present the proof for the second case.  The proof for the first case can be done in a similar way. 

Let $\CW$ be a class associated to $L_1$ as in Lemma \ref{qd-H} such that all groups $W\in\CW$ have the rank $k+1$. Using Lemma \ref{suff} we conclude that it is enough to prove that for every $W\in\mathcal{W}$ we have $\Hom(G,W)=0$ and $\Hom(W,G)$ is a torsion group.

Let $f:W\to G$ be a morphism. 
The rank of $W$ is $k+1$, hence $\Ker(f)\neq 0$. Then $\Ima(f)$ is a direct sum of a divisible group and a finite group. But $G$ is reduced, hence $\Ima(f)$ is finite.

Suppose that there exists $f:G\to W$ a nonzero morphism. Since $f_{L_1}=0$, we obtain a morphism $\overline{f}:G/L_1\to W$, where the torsion-free rank of $G/L_1$ is $1$. Suppose that $\overline{f}\neq 0$. Then $\Ima(\overline{f})$ is a rank $1$ subgroup of $W$, hence $\Ima(\overline{f})\cong \BBZ$. This implies that $G$ has a direct summand isomorphic to $\BBZ$, a contradiction.

e) Suppose that $G/T(G)$ is not reduced. It follows that $L_1/T(G)$ is not reduced. 
Since the torsion-free rank of $L_1$ is $1$ then $L_1/T(G)\cong\BBQ$, hence $L_1\in \CG$. The conclusion follows from the case c).  

f) As in the previous case, we can assume that $G/T(G)$ is not reduced.
If $L_1/T(G)$ is divisible, we can apply the case c).
If $L_1/T(G)$ is not divisible, it follows that the divisible part of $G/T(G)$ has rank $1$, and the conclusion follows from Proposition \ref{G/T(G)-rank1}.
\end{proof}

\begin{corollary}\label{pc}
Suppose that $G\in \CS^\star$ has the cancellation property, and that it has a decomposition as in Lemma \ref{pushout} such that $G'$ verifies one of the hypotheses listed in Proposition \ref{cazuri}. Then $\End_{\rmW}(G)$ has the unit lifting property.
\end{corollary}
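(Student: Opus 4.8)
The plan is to peel off the summands of $G$ that do not affect the Walk-endomorphism ring and then feed each case of Proposition \ref{cazuri} into Proposition \ref{fuchs-mixed}. First I would reduce from $G$ to $G'$. Since $V$ is finite it lies in $\CS$ and, as $V=T(V)$, its torsion-free quotient is $0$, so $V\in\CG$; hence Lemma \ref{fara-calG} applied to $G=V\oplus G'$ shows that $\End_{\rmW}(G)$ has the unit lifting property if and only if $\End_{\rmW}(G')$ does. I would also record the elementary fact that the cancellation property passes to direct summands: if $G=A\oplus B$ and $B\oplus X\cong B\oplus Y$, then adding $A$ to both sides gives $G\oplus X\cong G\oplus Y$, whence $X\cong Y$ by the cancellation property of $G$. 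In particular $G'$, and later any direct summand of $G'$, inherits the cancellation property.

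Next I would run through the cases of Proposition \ref{cazuri} applied to $G'$, in each one producing, for every positive integer $n$, a torsion-free group $H$ for which the pair $(G',H)$ satisfies $\mathrm{(I)}$ and $(\mathrm{II})_n$, and then invoking Proposition \ref{fuchs-mixed} together with the cancellation property of $G'$ to conclude that $\End_{\rmW}(G')$ has the unit lifting property. Cases d) and f)(ii) already furnish such an $H$ directly. For case a) ($G'$ quotient-divisible) I would quote the corollary to Proposition \ref{qd-H}; for case b) ($G'$ torsion-free, hence a reduced finite-rank torsion-free group without free direct summands, so in $\CS^\star$) I would quote Proposition \ref{Stelzer}; and for the reduced-quotient cases e)(i) and f)(i) I would quote Proposition \ref{G-bar-reduced}.

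The remaining cases c), e)(ii) and f)(iii) yield a decomposition $G'=L_1\oplus K_1$ with $L_1\in\CG$ and $K_1$ torsion-free. Here I would apply Lemma \ref{fara-calG} a second time, now with $L_1$ in the role of the $\CG$-summand, to replace $\End_{\rmW}(G')$ by $\End_{\rmW}(K_1)=\End(K_1)$. Since $K_1$ is a reduced torsion-free self-small group of finite rank without free direct summands, it lies in $\CS^\star$, and by the summand-inheritance observation it has the cancellation property. Proposition \ref{Stelzer} then supplies, for each $n$, a pair $(K_1,H)$ verifying $\mathrm{(I)}$ and $(\mathrm{II})_n$, and Proposition \ref{fuchs-mixed} gives the unit lifting property for $\End(K_1)$; tracing the two reductions back through Lemma \ref{fara-calG} delivers the statement for $\End_{\rmW}(G)$.

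The argument is essentially bookkeeping once the structural work in Proposition \ref{cazuri} and the existence results of Section \ref{sufficient-conditions} are in hand, so I do not expect a serious obstacle. The only points requiring care are the two applications of Lemma \ref{fara-calG} that discard first the finite summand $V$ and then the $\CG$-summand $L_1$ (one must check that each discarded piece genuinely lies in $\CG$, which is immediate), and the observation that the cancellation property descends to the direct summand on which Fuchs's lemma is finally used.
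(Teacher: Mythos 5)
Your proof is correct and takes essentially the same approach as the paper: the paper's own proof is the one-line citation of Lemma \ref{fara-calG} and Proposition \ref{fuchs-mixed}, and your argument—peeling off the $\CG$-summands ($V$, and then $L_1$ in cases c), e)(ii), f)(iii)) via Lemma \ref{fara-calG}, noting that cancellation passes to direct summands, and feeding each case of Proposition \ref{cazuri} through the existence results of Section \ref{sufficient-conditions} into Proposition \ref{fuchs-mixed}—is precisely the bookkeeping that citation leaves to the reader.
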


\begin{proof}
This follows from Lemma \ref{fara-calG} and Proposition \ref{fuchs-mixed}.
\end{proof}

\subsection{Groups of torsion-free rank at most 4} We are ready to prove that Stelzer's Theorem is true for self-small groups of torsion-free rank at most $4$. A proof for groups of torsion-free rank $1$ is presented in \cite[Proposition 3.5 and Corollary 3.6]{ABVW}.

\begin{theorem}\label{main-thm}
Let $G\in \CS^*$ be a group of torsion-free rank at most $4$. If $G$ has the cancellation property then $\End_{\rmW}(G)$ has the unit lifting property. 
\end{theorem}

\begin{proof} Consider a pushout diagram as in Lemma \ref{pushout}, and write $L=F_1\oplus L_1$ such that $F_1$ is free and $L_1$ has no free direct summands. It is easy to see that if $r_0(F_1)\geq 2$ then $r_0(L_1)\leq 2$. Hence we can apply Corollary \ref{pc}.
\end{proof}

\begin{corollary}\label{cor-main}
Suppose that $G$ is a self-small group of torsion-free rank at most $4$. If $G$ has the cancellation property then $G=F\oplus H$ such that $F$ is a free group and $\End_{\rmW}(H)$ has the unit lifting property.  
\end{corollary}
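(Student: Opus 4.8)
The plan is to reduce the arbitrary self-small group $G$ to a group in $\CS^\star$ to which Theorem \ref{main-thm} applies, by peeling off first a free direct summand and then the divisible part, and by exploiting the fact that the cancellation property is inherited by direct summands.

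First I would invoke the observation recorded in the introduction: since $G$ is self-small of finite torsion-free rank, it decomposes as $G = F \oplus H$ with $F$ free and $H$ self-small without free direct summands. (Concretely, one takes a free direct summand of maximal rank, which exists because the rank of any free direct summand is bounded by $r_0(G)$; maximality forces $H$ to have no free direct summand.) Here $r_0(H) \le r_0(G) \le 4$, and $H$ is self-small, being a direct summand of a self-small group. The cancellation property descends to direct summands: if $H \oplus A \cong H \oplus B$, then adding $F$ to both sides gives $G \oplus A \cong G \oplus B$, whence $A \cong B$ by the cancellation property of $G$. Thus $H$ has the cancellation property.

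Next I would strip off the divisible part of $H$. Because $H$ is self-small of finite torsion-free rank, Theorem \ref{char-ss} guarantees that each $T_p(H)$ is finite, so the divisible part $D$ of $H$ carries no torsion and is therefore a finite-rank torsion-free divisible group, $D \cong \BBQ^{m}$. Being injective, $D$ splits off: $H = D \oplus R$ with $R$ reduced. Then $D \in \CG$ (as $D/T(D) = D$ is divisible), while $R$ is reduced, self-small, and without free direct summands (otherwise $H$ would have one too), so $R \in \CS^\star$ with $r_0(R) \le r_0(H) \le 4$. Applying summand-inheritance of cancellation once more, $R$ has the cancellation property, and Theorem \ref{main-thm} then yields that $\End_{\rmW}(R)$ has the unit lifting property.

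Finally I would transfer this back to $H$: since $H = D \oplus R$ with $D \in \CG$, Lemma \ref{fara-calG} shows that $\End_{\rmW}(H)$ has the unit lifting property if and only if $\End_{\rmW}(R)$ does, so $\End_{\rmW}(H)$ has it. Together with the decomposition $G = F \oplus H$ and the fact that $F$ is free, this is exactly the assertion. I do not anticipate a genuine obstacle; the only points demanding care are the finiteness of the $p$-components (which forces the divisible part to be torsion-free, hence to lie in $\CG$ and be removable via Lemma \ref{fara-calG}) and the elementary but pivotal fact that cancellation passes to direct summands, which is precisely what allows $G$ to be replaced first by $H$ and then by $R$ without weakening the hypothesis.
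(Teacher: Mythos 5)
Your proof is correct and follows essentially the same route as the paper: the paper writes $G=F\oplus K\oplus Q$ with $F$ free, $Q$ divisible (hence torsion-free, so in $\CG$) and $K\in\CS^\star$, uses the fact that cancellation passes to direct summands to apply Theorem \ref{main-thm} to $K$, and then sets $H=K\oplus Q$, invoking Lemma \ref{fara-calG} exactly as you do. Your write-up merely makes explicit the details the paper leaves implicit (maximality of the free summand, torsion-freeness of the divisible part via finiteness of the $p$-components, and the summand-inheritance argument), so there is nothing to correct.
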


\begin{proof}
We write $G=F\oplus K\oplus Q$, where $F$ is free, $Q$ is divisible and $K\in \CS^\star$. Since $F$ and $Q$ have the cancellation property, it follows that $G$ has the cancellation property if and only if $K$ has the cancellation property. We recall that $Q$ is torsion-free, \cite{Ar-Mu}. If we denote $H=K\oplus Q$, the conclusion is obvious.     
\end{proof}

We recall from \cite{Br-qd} that a group $G\in \CS$ is \textit{strongly indecomposable} of $\BBQ\End_{\mathrm{W}}(G)$ has no non-trivial idempotents. From \cite[Theorem 2.2]{ABVW} and \cite[Proposition 3.1]{Br-2020} we obtain the following

\begin{corollary}
A strongly indecomposable self-small reduced group $G$ of torsion-free rank at most $4$ has the cancellation property if and only if there exists a finite group $B$ such that $G\cong B\oplus \BBZ$ or one is in the stable range of $\End_{\mathrm{W}}(G)$.
\end{corollary}

\end{document}